\newtheorem{theorem}[subsection]{Theorem}
\newtheorem{proposition}[subsection]{Proposition}
\newtheorem{lemma}[subsection]{Lemma}
\newtheorem{corollary}[subsection]{Corollary}
\theoremstyle{definition}
\newtheorem{remark}[subsection]{Remark}
\newtheorem{remarks}[subsection]{Remarks}
\newtheorem{example}[subsection]{Example}
\newtheorem{conjecture}[subsection]{Conjecture}
\numberwithin{equation}{subsection}
\def\gg{\mathfrak g}
\def\GG{\mathfrak G}
\def\HH{\mathfrak H}
\def\TT{\mathfrak T}
\def\G{\mathbf G}
\def\et{\acute et}
\newcommand{\Aut}{\operatorname{Aut}}
\newcommand{\Spec}{\operatorname{Spec}}
\newcommand{\GL}{{\operatorname{GL}}}
\newcommand{\simlgr}{\buildrel \sim \over \lra}
\newcommand{\PGL}{{\operatorname{PGL}}}
\newcommand{\lra}{\longrightarrow}
\newcommand{\Br}{\operatorname{Br}}
\newcommand{\cal}{\mathcal}
\newcommand{\calA}{\mathcal A}
\newcommand{\calL}{\mathcal L}
\newcommand{\calF}{\mathcal F}
\def\ol{\overline}
\def\Z{\mathbb Z}
\def\bF{\text{\rm \bf F}}
\def\bG{\text{\rm \bf G}}
\def\cL{\mathcal{L}}
\date{\today}
\begin{document}

\title[Torsors over  Laurent polynomial rings]{A classification of torsors over  Laurent polynomial rings }

\author{V. Chernousov}
\address{Department of Mathematics, University of Alberta,
    Edmonton, Alberta T6G 2G1, Canada}
\thanks{ V. Chernousov was partially supported by the Canada Research
Chairs Program
and an NSERC research grant} \email{chernous@math.ualberta.ca}

\author{P. Gille}\address{UMR 5208
Institut Camille Jordan - Universit\'e Claude Bernard Lyon 1
43 boulevard du 11 novembre 1918
69622 Villeurbanne cedex - France \newline
   Institute of Mathematics Simion Stoilow of the Romanian Academy,
  Calea Grivitei 21,
 RO-010702 Bucharest, Romania.
}
\thanks{ P. Gille was supported by the Romanian IDEI project PCE$_{-}$2012-4-364 of the Ministry of National Education
CNCS-UEFISCIDI}
\email{gille@math.univ-lyon1.fr}
\author{A. Pianzola}
\address{Department of Mathematics, University of Alberta,
    Edmonton, Alberta T6G 2G1, Canada.
    \newline
 \indent Centro de Altos Estudios en Ciencia Exactas, Avenida de Mayo 866, (1084) Buenos Aires, Argentina.}
\thanks{A. Pianzola wishes to thank NSERC and CONICET for their
continuous support}\email{a.pianzola@gmail.com}

\begin{abstract} Let $R_n$ be the ring of Laurent polynomials in $n$ variables 
over a field $k$ of characteristic zero and let $K_n$ be its fraction field.
Given a linear  algebraic $k$--group  $G$, we show that a
$K_n$-torsor under $G$ which is  unramified with respect to $X={\rm Spec}(R_n)$
extends to a unique toral $R_n$--torsor under $G$. This result, in turn, allows us  to classify all
$G$-torsors over $R_n$.

\smallskip

\noindent {\em Keywords:} Reductive group scheme, torsor, multiloop algebra.  \\

\noindent {\em MSC 2000:} 14F20, 20G15, 17B67, 11E72.
\end{abstract} 
 
 \maketitle 
 



{\small
\tableofcontents }


\section{Introduction} 

Torsors are the algebraic analogues of the principal homogeneous spaces that one encounters in the 
theory of Lie groups. While the latter are (under natural assumptions) locally trivial (in the usual topology), 
this is not the case for the former: a torsor $E \to X$ under a group scheme $\GG$ over $X$ is not necessarily 
trivial (i.e., isomorphic to 
$\GG$ with $\GG$ acting on itself by right multiplication) when restricted to any non-empty Zariski open subset $U$ of $X.$ 
The reason for this is that the Zariski topology is too coarse. The path out of 
this serious obstacle was initiated by J.-P. Serre (with what is now called the finite \'etale topology) and 
then implemented in full generality (together with an accompanying descent theory) by A. Grothendieck. 
The idea is to have certain morphisms $U \to X$ (e.g. \'etale or flat and  
of finite presentation) replace open immersions as trivializing local data. 

Torsors have played an important role in number theory (Brauer groups, Tate-Shafarevich group, 
Manin obstructions) and in the Langlands program (Ngo's proof of the Fundamental Lemma). 
Somehow surprisingly torsors have been used over the last decade to solve difficult problems in 
infinite dimensional Lie theory (see \cite{GP3} for an extensive list of references. See also \cite{CGP2} and \cite {KLP}). 

The way that torsors arise in this context is the following. The infinite dimensional object $\cL$ under 
consideration (for example, the centreless core of an extended affine Lie algebra [EALA] or a  Lie 
superconformal algebra) has an invariant called the centroid (essentially the linear endomorphisms of 
the object that commute with their multiplication). These centroids are Laurent polynomial rings 
$k[{t_1^{\pm 1}},\cdots, t_n^{\pm1}]$ in finitely many variables over a base field $k$ of characteristic 
$0.$ We will denote this ring by $R_n$, or simply by $R$ if no confusion is possible. 

The object $\cL$ is naturally an $R$-module and it inherits the algebraic structure of $\cL$ (for example $\cL$ 
is a Lie algebra over $R$). It is when $\cL$ is viewed as an object over $R$ that  torsors enter into the picture. 
One can for example classify up to $R$-isomorphism the objects under consideration using non-abelian 
\'etale cohomology. Of course at the end of the day one wants to understand the problem under 
consideration (say a classification) over $k$ and not $R$. Thankfully there is a beautiful theory, known 
as the ``centroid trick", that allows this passage. 

The above discussion motivates why one is interested in the classification of torsors over $R$ under 
a smooth reductive $R$-group scheme. We believe that the understanding of such 
torsors is of its own interest. This is the purpose of the present work. There is an important class of 
torsors under $\GG$ 
called {\it loop torsors} that appear naturally in infinite dimensional Lie theory. (Loop torsors are 
defined over an arbitrary base in \cite{GP3}. They have already caught the attention of researchers 
in other areas. See for example \cite{PZ}.) It is shown in \cite{GP3} that if $E$ is a torsor over $R$ 
under $\GG$, then $E$ being a loop torsor is equivalent to $E$ being {\it toral}, i.e. that the twisted $R$-group 
scheme $^{E}\GG$ admits a maximal torus. This is a remarkable property of the ring $R$: loop and 
toral torsors coincide. 

Toral torsors under reductive group schemes were completely classified in our paper
\cite{CGP2} with the use of Bruhat-Tits theory of buildings (see the Acyclicity Theorem~\ref{acyclicity}). 
We will use toral torsors in what follows (but the reader is asked to keep in mind that these are precisely the 
torsors that arise in infinite dimensional Lie theory). Given a smooth reductive group scheme 
$\GG$ over $R_n$ we want to classify/describe all the isomorphism classes  of $R_n$-torsors 
under $\GG$. Since $\GG$ is smooth reductive they are in natural one-to-one correspondence 
with elements of the pointed set $H^1_{\et}(R_n,\GG).$ We have of course 
by definition a natural inclusion 
$$H^1_{\et, toral}(R_n, \GG)    \, \subseteq \,  H^1_{\et} (R_n, \GG)$$ 
where $H^1_{\et, toral}(R_n, \GG)$ is the subset consisting of (isomorphism) classes of toral $\GG$-torsors.  
From now on by default our topology will be  \'etale; in particular we will denote $H^1_{\et}$ by $H^1$ and 
$H^1_{\et,toral}$ by $H^1_{toral}$. One of our main results 
is the following.
\begin{theorem}\label{main1} Under the above notation  there is a natural bijection
$$
H^1(R_n, \GG) \longleftrightarrow \bigsqcup_{[E] \in H^1_{toral}(R_n, \GG) }  \, H^1_{Zar}(R_n, {^{E}\GG}).$$
\end{theorem}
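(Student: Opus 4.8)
The plan is to build the bijection out of the standard torsion (twisting) bijection of non-abelian cohomology and then to reduce its two halves to the extension theorem announced in the abstract (an unramified $K_n$-torsor extends to a \emph{unique} toral $R_n$-torsor). For a toral $\GG$-torsor $E$ write $\tau_E \colon H^1(R_n, {^{E}\GG}) \simlgr H^1(R_n,\GG)$ for the torsion bijection, which sends the neutral class to $[E]$. Composing the tautological inclusion $H^1_{Zar}(R_n,{^{E}\GG}) \hookrightarrow H^1(R_n,{^{E}\GG})$ with $\tau_E$ and taking the disjoint union over toral classes defines
\[ \Phi \colon \bigsqcup_{[E]\in H^1_{toral}(R_n,\GG)} H^1_{Zar}(R_n,{^{E}\GG}) \longrightarrow H^1(R_n,\GG), \qquad ([E],[F]) \longmapsto \tau_E([F]). \]
I would first record that the inclusion $H^1_{Zar}\hookrightarrow H^1$ is genuinely injective: two Zariski-locally trivial torsors that are isomorphic as \'etale torsors are isomorphic as schemes equipped with a group action, hence Zariski-isomorphic, so $H^1_{Zar}(R_n,{^{E}\GG})$ is literally a subset of $H^1(R_n,{^{E}\GG})$. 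With this in hand it suffices to prove that $\Phi$ is a bijection.

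For surjectivity, start from a class $[E']\in H^1(R_n,\GG)$ and restrict it to the generic point, giving $[E'_{K_n}]\in H^1(K_n,\GG)$. Since $E'$ is defined over the whole of $R_n$ it restricts to a torsor over each local ring $\mathcal{O}_{X,x}$, so $[E'_{K_n}]$ is unramified with respect to $X$. The extension theorem then yields a toral $R_n$-torsor $E$ with $[E_{K_n}]=[E'_{K_n}]$. Because twisting commutes with restriction to $K_n$, the class $[F]:=\tau_E^{-1}([E'])$ has trivial generic fibre: over $K_n$ it equals $\tau_{E_{K_n}}^{-1}([E'_{K_n}]) = \tau_{E_{K_n}}^{-1}([E_{K_n}])$, which is the neutral class. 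The key geometric point is then that a generically trivial torsor under the reductive group ${^{E}\GG}$ is Zariski-locally trivial: at each $x\in X$ the ring $\mathcal{O}_{X,x}$ is a regular local ring containing the characteristic-zero field $k$ and with fraction field $K_n$, so by the Grothendieck--Serre property (valid in this setting) the map $H^1(\mathcal{O}_{X,x},{^{E}\GG})\to H^1(K_n,{^{E}\GG})$ has trivial kernel, and triviality of $F$ over $K_n$ forces $F$ to be trivial in a Zariski neighbourhood of $x$. Hence $[F]\in H^1_{Zar}(R_n,{^{E}\GG})$ and $\Phi([E],[F])=[E']$. I expect this isotriviality step, together with the extension theorem it rests on, to be the crux of the argument; the Acyclicity Theorem~\ref{acyclicity} is the natural tool should one want a self-contained substitute for the general Grothendieck--Serre statement.

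For injectivity, suppose $\Phi([E_1],[F_1])=\Phi([E_2],[F_2])=[E']$. Each $F_i$ is Zariski-locally trivial, hence generically trivial, so restriction to $K_n$ gives $[E'_{K_n}]=[E_{i,K_n}]$ for $i=1,2$; thus $[E_{1,K_n}]=[E_{2,K_n}]$ as unramified classes. The uniqueness clause of the extension theorem then forces $[E_1]=[E_2]=:[E]$. Since $\tau_E$ is a bijection, $\tau_E([F_1])=\tau_E([F_2])$ gives $[F_1]=[F_2]$ in $H^1(R_n,{^{E}\GG})$, and by the injectivity of $H^1_{Zar}\hookrightarrow H^1$ noted above this is an equality of Zariski classes. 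Therefore $\Phi$ is a bijection, which is the assertion of the theorem. The only genuinely deep inputs are the extension-and-uniqueness theorem and the passage from generic triviality to Zariski-local triviality; the twisting formalism and the subset property of $H^1_{Zar}$ are formal.
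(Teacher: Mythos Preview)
Your proof is correct and follows essentially the same route as the paper's own argument (Theorem~\ref{cor_main}): define the map via the torsion bijections, use Theorem~\ref{main} to produce the unique toral class matching a given torsor generically, and then invoke Grothendieck--Serre over the local rings of $R_n$ (the paper packages this last step as Corollary~\ref{zar}, derived from Fedorov--Panin) to pass from generic triviality to Zariski-local triviality; injectivity is handled identically via the uniqueness clause of Theorem~\ref{main}.
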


The content of the above equality could be put in words as follows. Given a torsor $E'$ over $R_n$ under $\GG$ there exists a {\it unique} toral torsor $E$ such that $E'$ is locally isomorphic (in the Zariski topology) to $E.$ 

The proof of this 
result is achieved by a careful analysis (of independent interest) 
of the ramification of the torsors under consideration. We denote by $K_n=k(t_1,\dots,t_n)$  
the fraction field of $R_n$ and set $F_n=k((t_1))\dots ((t_n))$. The precise statement of our other 
main result is the following.

\begin{theorem}\label{main} Let $\GG$ be a smooth affine  $R_n$--group scheme. Assume that either
\smallskip

{\rm (i)}  $\GG$ is reductive and admits a maximal $R_n$-torus (equivalently $\GG$ 
is ``loop reductive'' \cite[cor. 6.3]{GP3});

\smallskip

\noindent
or

\smallskip

{\rm (ii)} there exists a linear (smooth, not necessary connected) algebraic group  $\bG$ over $k$ and a
loop torsor $E$ under $\bG \times_k R_n$
such that $\GG= {^{E}(\bG \times_{k} R_n)}$.

\smallskip

\noindent Then we have natural bijections
$$
H^1_{ toral }(R_n , \GG) \simlgr H^1(K_n , \GG)_{R_n-unr} \simlgr H^1(F_n, \GG)
$$
where $H^1(K_n , \GG)_{R_n-unr}$ stands for the subset of the Galois cohomology set $ H^1(K_n , \GG)$  consisting of  (isomorphism) classes of $\GG$-torsors over $K_n$ extending everywhere 
in codimension one (see \S \ref{sect_def_unr}).
\end{theorem}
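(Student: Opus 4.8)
The plan is to factor the asserted chain of bijections through the cocycle description of loop torsors and to reduce everything, by induction on $n$, to the structure of iterated Laurent series fields. First I would record the two restriction maps attached to the inclusions $R_n \hookrightarrow K_n \hookrightarrow F_n$; since a torsor that extends over all of $\Spec(R_n)$ is automatically unramified in codimension one, these produce
$$\alpha\colon H^1_{toral}(R_n,\GG) \lra H^1(K_n,\GG)_{R_n-unr}, \qquad \beta\colon H^1(K_n,\GG)_{R_n-unr} \lra H^1(F_n,\GG).$$
By \cite{GP3} a torsor over $R_n$ is toral if and only if it is a loop torsor, so the source of $\alpha$ is the set of loop classes, described explicitly by continuous loop cocycles valued in $\GG$ and factoring through the geometric fundamental group $\wh{\Z}(1)^n$ of $\Spec(R_n)$. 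In case (ii) I would first twist by the loop torsor $E$: the standard torsion bijection replaces $\GG$ by $\bG\times_k R_n$ and identifies $H^1_{toral}$ with a set of loop classes, so that both cases are ultimately handled by the same loop cocycle analysis.

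The heart of the argument is to show that the composite $\beta\circ\alpha$ is a bijection, which I would do by comparing loop cocycles over $R_n$ with all of $H^1(F_n,\GG)$. The field $F_n=F_{n-1}((t_n))$ is complete discretely valued with residue field $F_{n-1}=k((t_1))\cdots((t_{n-1}))$, and its absolute Galois group realizes precisely the inertia datum $\wh{\Z}(1)\sdp \Gal(F_{n-1})$ used to build loop cocycles. I would therefore argue by induction on $n$: the decomposition of the cohomology of a complete discretely valued field into an unramified part (coming from the residue field) and a ramified part (coming from inertia) matches, step by step, the inductive build-up of loop cocycles over $R_n$. This identifies $H^1_{toral}(R_n,\GG)$ with $H^1(F_n,\GG)$ and shows that $\beta\circ\alpha$ is bijective; in particular $\alpha$ is injective, which is the uniqueness of the toral extension, and $\beta$ is surjective.

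It then remains to upgrade one of the two maps to a bijection, and I would prove that $\alpha$ is surjective, i.e. that every $R_n$-unramified class over $K_n$ extends to a toral $R_n$-torsor. The input here is Bruhat--Tits theory through the Acyclicity Theorem~\ref{acyclicity} together with the classification of toral torsors from \cite{CGP2}: an unramified class gives compatible unramified data at every height-one prime of $R_n$, and I would assemble these, using a purity/patching argument for torsors under the smooth affine group $\GG$ over the regular ring $R_n$, into a global $R_n$-torsor, whose torality is forced by the loop nature already established in the previous step. With $\alpha$ both injective and surjective it is bijective, and then $\beta=(\beta\circ\alpha)\circ\alpha^{-1}$ is bijective as well.

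The main obstacle is precisely this surjectivity: controlling the ramification simultaneously at all codimension-one points of $\Spec(R_n)$ and assembling the local extensions into a single global toral torsor. Unlike the injectivity and the $F_n$-identification, which are essentially formal once the loop cocycle dictionary and the inductive structure of $F_n$ are in place, the extension step genuinely uses the regularity of $R_n$ together with a Grothendieck--Serre / Colliot-Th\'el\`ene--Sansuc type purity statement for $\GG$-torsors, and it is where the hypotheses on $\GG$, namely loop reductivity in case (i) or loop-twisted constancy in case (ii), are indispensable.
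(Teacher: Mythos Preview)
Your setup is correct: the factorization through $\alpha$ and $\beta$ and the use of the Acyclicity Theorem to conclude that $\beta\circ\alpha$ is a bijection (hence $\alpha$ injective, $\beta$ surjective) is exactly how the paper begins. But the step you flag as the ``main obstacle'' is a genuine gap, and your proposed attack on it does not work. You want to prove $\alpha$ surjective by taking an $R_n$-unramified class over $K_n$, extending it locally at every height-one prime, and then ``assembling'' these into a global toral $R_n$-torsor via a purity/patching argument. Lemma~\ref{lem_unr} does give you an extension to some open $U\subset\Spec(R_n)$ containing all codimension-one points, but there is no purity theorem for non-abelian $H^1$ that lets you extend across the missing codimension-$\geq 2$ locus, and no mechanism in your sketch forces the resulting torsor to be toral. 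Grothendieck--Serre (Fedorov--Panin) gives trivial kernel for local rings, not an extension statement of this kind.

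The paper bypasses this obstacle entirely by proving the \emph{other} map is bijective. Via the torsion bijection it shows (Lemma~\ref{equivalence}) that $\alpha$ is bijective if and only if, for every toral twist $^E\GG$, the map $^E\psi\colon H^1(K_n,{^E\GG})_{R_n\text{-unr}}\to H^1(F_n,{^E\GG})$ has trivial kernel. Since hypotheses (i) and (ii) are stable under toral twisting, this reduces the theorem to: for every $\HH$ satisfying (i) or (ii), $\psi$ has trivial kernel. That is then proved by induction on $n$, but not by your proposed inertia decomposition of $\Gal(F_n)$. Instead one uses the field tower $K_n\subset F_{n-1}(t_n)\subset F_n$: a class $\gamma$ in $\ker\psi$ is pushed to $F_{n-1}(t_n)$, where flatness of $\Spec(F_{n-1}[t_n^{\pm1}])\to\Spec(R_n)$ (Lemma~\ref{lem_funct2}) keeps it unramified, and the case $n=1$ over the base field $F_{n-1}$ kills it there; then one embeds $F_{n-1}(t_n)$ into $k(t_n)((t_1))\cdots((t_{n-1}))$ and applies the induction hypothesis over the new base field $k'=k(t_n)$ to conclude $\gamma=1$. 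The key idea you are missing is this twisting reduction to a \emph{kernel} statement for $\psi$, together with the base-change induction that proves it.
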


We need to explain briefly the assumptions. In both cases (i) and (ii) we consider loop (=toral) group 
schemes because they
play a central role in the classification of $R_n$-torsors (see Theorem~\ref{main1}). 
Also, even though reductive group schemes 
are the main 
interest in this paper,  in case (ii) we include  group schemes over $R_n$ which 
are not necessary ``connected". Far from being a trivial generalization, this case is absolutely essential for applications to infinite dimensional Lie theory. Indeed one is forced to understand twisted forms of $R_n$-Lie algebras $\mathfrak{g} \otimes_k R_n$, where   $\mathfrak{g}$ is a 
split finite dimensional simple Lie algebra over $k$. This leads to torsors under the group $\G \times_k R_n$ where $\G$ is the linear algebraic $k$-group  ${\rm Aut}(\mathfrak{g}).$ Many interesting 
infinite dimensional Lie objects over $k$,  including extended affine Lie algebras 
(a particular case of them are the celebrated affine Kac--Moody Lie algebras) and 
Lie superconformal algebras, follow under the above considerations.


Note that  the special case  $\G=\PGL_d$ (i.e. $R_n$--Azumaya algebras) 
was already quite understood by Brauer group techniques when the base field is algebraically 
closed \cite[\S 4.4]{GP2}. 
 Note also that Theorem~\ref{main} refines  our  acyclicity theorem, i.e. the bijection 
 $H^1_{toral}(R_n, \GG) \simlgr H^1(F_n, \GG)$ 
(\cite[th. 14.1]{CGP2} in case (i) (resp. \cite[th. 8.1]{GP3} in  case (ii)).

\medskip

The structure of the paper is as follows. In  section \ref{section_sorites}, we establish useful generalities
about unramified  functors. Section \ref{section_cohomology}
discusses unramified non-abelian cohomology.
 In section \ref{section_proof}, we prove  Theorem~\ref{main}. Section \ref{section_applications} is devoted to applications including a disjoint union decomposition
for the set  $H^1(R_n , \GG)$ (Theorem~\ref{main1}). Two important particular cases
are considered in detail: the cases of orthogonal groups and
projective linear groups. This illustrates that our main result, which may look rather abstract and remote in appearance, can lead to new very concrete classifications/descriptions of familiar objects.

\section{Unramified functors}\label{section_sorites}

We follow essentially the setting of \cite{CT}.
Let $S$ be a scheme. If $X$ is an integral  $S$-scheme 
we denote by $\kappa(X)$ the   fraction field of $X$.
Let $\calF$ be an $S$-functor, 
that is a contravariant functor $X \mapsto \calF(X)$ 
from the category of $S$--schemes into the category of sets.
If $X$ is  integral normal one defines the following two subsets of $ \calF(\kappa(X)) $:
$$
\calF(\kappa(X))_{X-loc}:=  \bigcap\limits_{ x \in X}
\, \mathrm{Im}\Bigl(  \calF(O_{X,x}) \to  \calF(\kappa(X)) \Bigr)
$$
and
$$
\calF(\kappa(X))_{X-unr}:=  \bigcap\limits_{ x \in X^{(1)}}
\, \mathrm{Im}\Bigl(  \calF(O_{X,x}) \to  \calF(\kappa(X)) \Bigr).
$$
The first subset $\calF(\kappa(X))_{X-loc}$ is called the subset of {\it local classes}
with respect to $X$ and the second one $\calF(\kappa(X))_{X-unr}$ is called the subset of {\it unramified classes }
with respect to $X$.
Obviously, we have the inclusions $$
\calF(\kappa(X))_{X-loc} \, \subseteq \, \calF(\kappa(X))_{X-unr} \, \subseteq \,
\calF(\kappa(X)).
$$

\begin{lemma}\label{lem_funct}
Let  $Y$ is an integral normal scheme over $S.$ Let $f:Y \to X$ be a dominant morphism
of $S$-schemes. Consider the map $\calF(f^*): \calF(\kappa(X)) \to \calF(\kappa(Y))$
 induced by the comorphism $f^*=f^*_{\kappa(X)}:\kappa(X)\to\kappa(Y)$. Then.

\smallskip
  
\noindent (1) 
$\calF(f^*)(\calF(\kappa(X))_{X-loc}) \, \subseteq \, \calF(\kappa(Y))_{Y-loc}$.
 
\smallskip
  
\noindent (2) If $f$ is flat
then 
 $\calF(f^*)(\calF(\kappa(X))_{X-unr}) \, \subseteq \, \calF(\kappa(Y))_{Y-unr}$.
 \end{lemma}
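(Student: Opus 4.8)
The plan is to prove both statements by chasing an element through the relevant images and using the naturality of $\calF$ together with the local structure of the morphism $f$.

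The plan is to prove both inclusions by a single diagram chase, transporting a local (resp. unramified) class through the local homomorphisms on stalks induced by $f$; the only extra ingredient needed for the unramified case is a dimension inequality coming from flatness.

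First I would treat part (1). Fix $\alpha \in \calF(\kappa(X))_{X-loc}$ and an arbitrary point $y \in Y$, and set $x = f(y)$. Since $f(y)=x$, the morphism $f$ induces a local homomorphism $O_{X,x} \to O_{Y,y}$, and the morphism $\Spec O_{Y,y} \to X$ factors through $\Spec O_{X,x}$. Because $Y$ is integral and $f$ is dominant, the generic point of $Y$ maps to that of $X$, so these maps are compatible with the comorphism $f^*\colon \kappa(X)\to\kappa(Y)$ on fraction fields. Applying the contravariant functor $\calF$ then yields a commutative square
\[
\begin{array}{ccc}
\calF(O_{X,x}) & \longrightarrow & \calF(\kappa(X)) \\
\downarrow & & \downarrow \\
\calF(O_{Y,y}) & \longrightarrow & \calF(\kappa(Y))
\end{array}
\]
whose right vertical arrow is $\calF(f^*)$. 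Since $\alpha$ is a local class it lifts to some $\beta \in \calF(O_{X,x})$; chasing $\beta$ around the square shows that $\calF(f^*)(\alpha)$ lies in the image of $\calF(O_{Y,y}) \to \calF(\kappa(Y))$. As $y$ is arbitrary, this gives $\calF(f^*)(\alpha) \in \calF(\kappa(Y))_{Y-loc}$.

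For part (2) the same square does the work, provided I know that $\alpha$ still lifts to $\calF(O_{X,x})$ when $y$ is allowed to range only over $Y^{(1)}$. Here flatness controls the codimension: if $f$ is flat, then $O_{X,x} \to O_{Y,y}$ is a flat local homomorphism, and going-down yields $\dim O_{X,x} \le \dim O_{Y,y}$. Hence for $y \in Y^{(1)}$ we get $\dim O_{X,x} \le 1$, so $x$ is either the generic point of $X$ or a point of $X^{(1)}$. In the first case $O_{X,x}=\kappa(X)$ and $\alpha$ lifts trivially; in the second case $\alpha$ lifts precisely because $\alpha \in \calF(\kappa(X))_{X-unr}$. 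In either case the diagram chase of part (1) applies verbatim, and letting $y$ run over $Y^{(1)}$ gives $\calF(f^*)(\alpha) \in \calF(\kappa(Y))_{Y-unr}$.

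The main obstacle — indeed the only nonformal step — is the dimension inequality $\dim O_{X,f(y)} \le \dim O_{Y,y}$ used to force $f(y)$ into codimension at most one. This is exactly where flatness enters, through the going-down theorem, and it is also exactly what can fail without it; this explains why the hypothesis is dispensable in part (1) but indispensable in part (2).
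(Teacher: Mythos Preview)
Your proof is correct. Part (1) is identical to the paper's argument: both set $x=f(y)$, observe that the induced map $O_{X,x}\to O_{Y,y}$ fits into the obvious commutative square, and chase the class through.

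For part (2) you take a genuinely different route from the paper. You argue that the flat local homomorphism $O_{X,x}\to O_{Y,y}$ satisfies going-down, hence $\dim O_{X,x}\le\dim O_{Y,y}=1$, so $x$ is either the generic point or lies in $X^{(1)}$; in either case the class lifts and the square from part (1) finishes. The paper instead reduces to $A=O_{X,x}$, $B=O_{Y,y}$ a DVR with valuation $v$, and splits into two cases according to whether $v|_{K}$ is trivial. If trivial, then $K\subset B$ and the lift is immediate; if not, the paper shows (via faithful flatness of the local map and the identity $A=B\cap K$) that $A$ itself is the valuation ring of $v|_K$, hence a DVR, so $x\in X^{(1)}$. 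The two case splits match up, but the mechanism differs: you use going-down to bound Krull dimension, the paper identifies $O_{X,x}$ concretely as a valuation ring. Your argument is shorter and avoids the faithful-flatness/descent step; the paper's argument yields the extra information that $O_{X,x}=O_{Y,y}\cap\kappa(X)$, which is not needed here but is occasionally useful elsewhere.
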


\begin{proof} (1) The comorphism $f^*$ allows us to view $\kappa(X)$ as a subfield 
$\kappa(X) \hookrightarrow \kappa(Y)$ of the field $\kappa(Y)$.
Let $\gamma \in \calF(\kappa(X))_{X-loc}$.
We want to show that its image   $\gamma_{\kappa(Y)}:=\calF(f^*)(\gamma) \in \calF(\kappa(Y))$ 
under the base change is 
local with respect  to $Y$.  
Let $y \in Y$ and put $x=f(y)$. 
The commutative square
$$
\begin{CD}
\kappa(X) & \enskip  \subset \enskip  & \kappa(Y) \\ 
\cup && \cup \\
O_{X,x}& \enskip \subset \enskip  &  O_{Y,y}
\end{CD}
$$
induces a commutative diagram
$$
 \xymatrix{
\calF( \kappa(X))  \ar[r] & \calF( \kappa(Y))  \\ 
\calF( O_{X,x})  \ar[r] \ar[u]& \calF( O_{Y,y} ).\ar[u] 
}
$$
Since $\gamma \in \mathrm{Im}\bigl( \calF( O_{X,x}) \to \calF( \kappa(X)) \bigr)$, it follows
that $\gamma_{\kappa(Y)}$ is contained in $\mathrm{Im}\bigl( \calF( O_{Y,y}) \to \calF( \kappa(Y))\bigr)$.
Thus $\gamma_{\kappa(Y)} \in \calF( \kappa(Y))_{Y-loc}$.
\smallskip

\noindent (2)  Assume now that $\gamma \in \calF( \kappa(X))_{X-unr}$
and let $y \in Y^{(1)}$. As above, we set  $x=f(y)$.
Without loss of generality we may assume that 
$$X=\Spec(A)= \Spec(O_{X,x})$$
and that 
$$Y=\Spec(B)=\Spec(O_{Y,y})$$ where $B$ is a DVR. 
Let $v$ be the discrete valuation  on $\kappa(Y)$ corresponding to  the valuation ring $B$.
For brevity we denote by $K$ (resp. $L$) the fraction field of $A$ (resp. $B$).
If $v(K^\times)=0$, then $K \subset B$ and therefore  
$$\gamma_{\kappa(Y)} \in \mathrm{Im}\bigl( \calF( O_{Y,y}) \to \calF( \kappa(Y)) \bigr).$$

Assume now that $v(K^\times) \not = 0$.
Then ${\mathfrak m}_A B \not = B$ so that 
$A \to B$ is a local morphism. By \cite[0.6.6.2]{EGAI}, since $B$ is flat over $A$ the ring 
$B$ is a faithfully flat $A$--module. 
It follows
 that $A=B \cap K$ (apply \cite[\S\,I.3, \S 5, prop. 10]{BAC} with $F=K$ and $F'=A$. An alternative proof can be given by appealing to \cite[2.1.13]{EGAIV}).
Let $A_v= \{ x \in K^\times \, \mid \, v(x) \geq 0\}$ be
the valuation ring of $v_{\mid K}$. Then $A_v= K \cap B=A$, so that $A$ is a DVR.
This implies that $\gamma \in \mathrm{Im}\bigl( \calF( O_{X,x}) \to \calF( \kappa(X)) \bigr)$, and 
the commutative diagram above yields that
$\gamma_{\kappa(Y)} \in  \mathrm{Im}\bigl( \calF( O_{Y,y}) \to \calF( \kappa(Y)) \bigr)$.
Thus $\gamma_{\kappa(Y)} \in \calF( \kappa(Y))_{Y-unr}$.
\end{proof}

\smallskip

We shall discuss  next the case of  non-abelian cohomology functors, 
but we remark that this technique and considerations can be applied to various interesting functors 
such as Brauer
groups, Witt groups, unramified Galois cohomology...

\section{Non-abelian cohomology}\label{section_cohomology}

\subsection{Some terminology}\label{subsec_terminology}

%
%

Let $X$ be a scheme and let $\GG$ be  an $X$--group scheme.
The pointed set of non-abelian \v Cech cohomology on the flat
 (resp. \'etale,  Zariski) site of $X$ with
 coefficients in $\GG$, is denoted by $H_{fppf}^1(X, \GG)$ (resp.
 $H_{\et}^1(X, \GG)$,  $H_{Zar}^1(X, \GG))$.  These pointed
 sets measure the isomorphism classes of sheaf torsors over $X$ under 
 $\GG$ with respect to the chosen topology (see  \cite[ Ch.\,IV
 \S1]{M} and \cite{DG} for basic definitions and references). If $X = \Spec(R),$ following
 customary usage and depending on the context, we also use the notation
 $H_{fppf}^1(R, \GG)$ instead of $H_{fppf}^1(X, \GG)$. Similarly for
 the \'etale and Zariski sites.

 If  $\GG$ is in addition affine, by faithfully flat descent all of our sheaf torsors are
 representable. They are thus {\it torsors}
 in the usual sense. For a $\GG$-torsor $E$   we denote by  ${^E\GG}$ the twisted form
of $\GG$ by inner automorphisms; it is an affine group scheme over $X$.
 Furthermore, if $\GG$ is smooth
 all torsors are locally trivial for the \'etale topology. In
 particular, $H_{\et}^1(X, \GG) =H_{fppf}^1(X, \GG).$ These
 assumptions on $\GG$ hold in most of the situations that arise
 in our work. Also, as we mentioned in the introduction, by default 
our topology will be \'etale so that instead of $H^1_{\et}(X,\GG)$ we will write
$H^1(X,\GG)$.

 \smallskip

 Given an $X$--group $\GG$ and a morphisms $Y \rightarrow X$ of schemes, 
 we let $\GG_Y$ denote the $Y$--group $\GG \times_X Y$ obtained by 
 base change. For convenience, we will
 denote 
 $H^1(Y,
 \GG_Y)$ by $H^1(Y, \GG)$.

\smallskip

Assuming $\GG$ affine of finite type, a \emph{maximal torus} $\TT$ of $\GG$ is 
a subgroup $X$-scheme $\TT$ of $\GG$ such that $\TT \times_X \ol{\kappa(x)}$ is 
a maximal $\ol{\kappa(x)}$--torus of $\GG \times_X \ol{ \kappa(x)}$ for each
point $x \in X$ \cite[XII.1]{SGA3}.   Here $\ol{\kappa(x)}$ denotes an algebraic closure of 
$\kappa(x)$.  A $\GG$--torsor $E$ is  \emph{toral} if the twisted 
group scheme $^E \GG$ admits a maximal $X$-torus.
We denote by $H_{fppf, toral}^1(X, \GG)$ the subset of 
$H_{fppf}^1(X, \GG)$  consisting of (isomorphism) classes of toral $X$--torsors under $\GG$.

\subsection{Torsion bijection}\label{torsion}
If $E$ is an $X$--torsor under $\GG$ (not necessarily toral),
according to \cite[III.2.6.3.1]{Gi} there exists a natural bijection
$$ 
\tau_E: H_{fppf}^1(X, {^E\GG}) \to H_{fppf}^1(X, \GG),$$
called the torsion bijection, which takes the class of the trivial torsor under $^E\GG$ 
to the class of $E$. It is easy to see that its restriction to classes of toral torsors 
induces a bijection 
$$H_{fppf, toral }^1(X, {^E\GG}) \to H_{fppf,toral}^1(X, \GG).$$


%
%

\subsection{Acyclicity Theorem} The following theorem is the main tool for proving  our main results.
\begin{theorem}\label{acyclicity} Let $\GG$ be a smooth affine  $R_n$--group scheme. Assume that either
\smallskip

{\rm (i)}  $\GG$ is reductive and admits a maximal $R_n$-torus (equivalently $\GG$ 
is ``loop reductive'' \cite[cor. 6.3]{GP3});

\smallskip

\noindent
or

\smallskip

{\rm (ii)} there exists a linear (smooth, not necessary connected) algebraic group  $\bG$ over $k$ and a
loop torsor $E$ under $\bG \times_k R_n$
such that $\GG= {^{E}(\bG \times_{k} R_n)}$.

\smallskip

\noindent Then a natural map
$$
H^1_{ toral }(R_n , \GG) \longrightarrow H^1(F_n, \GG)
$$
is bijective.
\end{theorem}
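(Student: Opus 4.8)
The plan is to reduce the Acyclicity Theorem (Theorem~\ref{acyclicity}) to the two already-established acyclicity results cited in the introduction, namely \cite[th.~14.1]{CGP2} for case (i) and \cite[th.~8.1]{GP3} for case (ii). The crucial input I need to produce is the construction of the \emph{natural map}
$$
H^1_{toral}(R_n,\GG) \longrightarrow H^1(F_n,\GG)
$$
and the verification that it is the same map appearing in those references, after which bijectivity is inherited. The map itself should come from the inclusion of rings $R_n \hookrightarrow F_n$: writing $R_n = k[t_1^{\pm1},\dots,t_n^{\pm1}]$ and $F_n = k((t_1))\cdots((t_n))$, base change along $\Spec(F_n) \to \Spec(R_n)$ induces $H^1(R_n,\GG) \to H^1(F_n,\GG)$, and I would restrict this to the toral subset. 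So the first step is to set up this base-change morphism and check it restricts sensibly to toral classes.

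Next I would handle each case separately. In case (i), where $\GG$ is loop reductive with a maximal $R_n$-torus, \cite[th.~14.1]{CGP2} already asserts the bijection via Bruhat--Tits building theory; my task reduces to confirming that the natural base-change map coincides with the map constructed there, which is essentially a matter of unwinding definitions. In case (ii), where $\GG = {}^E(\bG \times_k R_n)$ is a twist by a loop torsor of a possibly disconnected linear algebraic $k$-group, I would invoke the torsion bijection from Subsection~\ref{torsion}: the bijections $\tau_E$ on flat (equivalently \'etale, by smoothness) cohomology restrict to bijections on toral classes, and they are compatible with base change to $F_n$. This lets me transport the statement for $\GG$ to the statement for the untwisted group $\bG \times_k R_n$, for which \cite[th.~8.1]{GP3} supplies the acyclicity bijection directly.

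Concretely, the argument in case (ii) would proceed through the commutative square relating the torsion bijection over $R_n$ and over $F_n$. I would write
$$
\begin{CD}
H^1_{toral}(R_n, {}^E(\bG\times_k R_n)) @>>> H^1(F_n, {}^E(\bG\times_k R_n)) \\
@V{\tau_E}VV @VV{\tau_{E_{F_n}}}V \\
H^1_{toral}(R_n, \bG\times_k R_n) @>>> H^1(F_n, \bG\times_k R_n)
\end{CD}
$$
where both vertical arrows are the torsion bijections and the bottom arrow is known to be bijective by \cite[th.~8.1]{GP3}. Commutativity of such a square is a standard compatibility of the torsion map with base change along $\Spec(F_n)\to\Spec(R_n)$. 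Since three of the four arrows are bijective, so is the top one, which is the desired map.

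The main obstacle I anticipate is not the abstract diagram-chasing but the \emph{identification} of the natural base-change map with the map constructed in \cite{CGP2} and \cite{GP3}. Those references may phrase the acyclicity bijection through the intermediate field-theoretic or building-theoretic machinery rather than through a single ring inclusion $R_n \hookrightarrow F_n$, so I would need to verify carefully that restricting the base-change map to toral classes lands in the correct target and agrees with the known bijection on the nose. A secondary delicate point is checking that the torsion bijection genuinely respects the toral condition after base change to $F_n$ — that a torsor is toral over $F_n$ in the appropriate sense compatible with its being toral over $R_n$ — but this should follow from the good behaviour of maximal tori under the faithfully flat base change $R_n \to F_n$ together with the remarks already made in Subsection~\ref{torsion}.
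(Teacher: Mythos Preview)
Your proposal is correct and aligns with the paper's approach: the paper's proof of Theorem~\ref{acyclicity} consists entirely of the sentence ``See \cite[th.~14.1]{CGP2} in case (i) and \cite[th.~8.1]{GP3} in case (ii),'' so this theorem is simply a recall of prior results and the natural map is the base-change map you describe. Your torsion-bijection reduction in case (ii) is sound but likely superfluous, since \cite[th.~8.1]{GP3} is already stated for the twisted group $\GG = {}^E(\bG \times_k R_n)$ rather than only for the untwisted $\bG \times_k R_n$; you can cite it directly without the intermediate diagram.
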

\begin{proof} See \cite[th. 14.1]{CGP2} in case (i) and \cite[th. 8.1]{GP3} in  case (ii).
\end{proof}

\subsection{Grothendieck--Serre's conjecture}\label{sect_gr}

The following conjecture is due to  
Grothendieck--Serre (\cite[Remarque 3]{Se1}, \cite[Remarque 1.1.a]{Gr}).

\begin{conjecture}\label{conjlocal} Let $R$ be a regular local ring with fraction field
$K$. If  ${\mathfrak G}$  a reductive group scheme over $R$
then the natural map $H^1(R, {\mathfrak G}) \to H^1(K, {\mathfrak G})$ has trivial kernel.
\end{conjecture}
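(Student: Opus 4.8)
The plan is to reduce the general statement to a geometric situation over a field, and then to a rigidity problem for torsors on a relative curve, where the affine Grassmannian supplies the decisive input. Because the rings occurring in this paper are local rings of $R_n$ over a field $k$ of characteristic zero, I would concentrate on the \emph{equicharacteristic} case, where the machinery is cleanest. First I would use Popescu's theorem (N\'eron--Popescu desingularization) to write the regular local ring $R$ containing $k$ as a filtered colimit of smooth $k$-algebras. Given a class $E \in \ker\bigl( H^1(R,\mathfrak{G}) \to H^1(K,\mathfrak{G}) \bigr)$, the reductive group scheme $\mathfrak{G}$, the torsor $E$, and a chosen $K$-trivialization are all of finite presentation, so a standard limit argument (EGA IV, \S 8) descends them to a finite stage. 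This reduces matters to the case $R = \mathcal{O}_{X,x}$, the local ring at a closed point of a smooth affine $k$-variety $X$; replacing $x$ by a finite set of points and passing to the semilocalization is harmless.

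Next I would invoke a geometric presentation of Quillen--Gabber--Panin type. Shrinking $X$ around $x$ and using Panin's formalism of \emph{nice triples}, one produces an essentially smooth morphism $q \colon \mathcal{X} \to S$ of relative dimension one onto the spectrum $S$ of a smooth semilocal $k$-algebra, a closed subscheme $Z \subset \mathcal{X}$ finite over $S$ and containing the non-triviality locus of $E$, and a section of $q$ avoiding $Z$. This recasts the problem as the triviality of a $\mathfrak{G}$-torsor on a smooth relative affine curve that is already trivial away from $Z$ and along the distinguished section.

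The heart of the argument, and the step I expect to be the main obstacle, is the \emph{relative curve case}: one must show that a $\mathfrak{G}$-torsor on $\mathbb{A}^1_A$ (or on the relative curve) whose restriction to a suitable closed fibre is trivial must itself be trivial. This is exactly the $\mathbb{A}^1$-rigidity statement of Fedorov--Panin, and it rests on the geometry of the affine Grassmannian of $\mathfrak{G}$ together with a Beauville--Laszlo formal gluing across $Z$; it is the ingredient that resisted proof for decades. Granting it, the torsor becomes trivial on a Nisnevich neighbourhood of $x$, and since $\mathfrak{G}$ is smooth this Nisnevich-local triviality, combined with the given generic triviality, patches to a trivialization over $R$, so the kernel is trivial. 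In mixed characteristic the same skeleton applies, but both the desingularization input and the curve case are substantially harder, and that is where the conjecture in full generality remains most delicate.
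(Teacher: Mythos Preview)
The statement you are trying to prove is labeled a \emph{Conjecture} in the paper, and the paper does not attempt to prove it. Immediately after stating it, the authors simply record that the equicharacteristic case (regular local ring containing an infinite field) is a theorem of Fedorov--Panin \cite{FP,PSV}, and they then use that result as a black box (see Corollary~\ref{cor_FP} and Corollary~\ref{zar}). So there is no ``paper's own proof'' to compare against: your proposal is an outline of the Fedorov--Panin strategy, not a comparison target.

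As a sketch of Fedorov--Panin, your outline is broadly accurate in spirit: Popescu reduction to the geometric case, Panin's nice-triple presentation to a relative curve, and the affine Grassmannian input for the key $\mathbb{A}^1$-rigidity step. But it is not a proof. The phrase ``granting it'' at the crucial step concedes exactly the point where the real work lies, and the final patching sentence (``Nisnevich-local triviality, combined with the given generic triviality, patches to a trivialization over $R$'') is not quite how the argument concludes---once the torsor is Nisnevich-locally trivial over the semilocal $R$ it is already trivial, no further patching with the generic fibre is needed. Also note that for the purposes of this paper only the case of a regular local ring containing an infinite field of characteristic zero is ever used, so the mixed-characteristic discussion, while correct in saying the problem is harder, is irrelevant to what the paper needs.
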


If $R$ contains an infinite field $k$ (of any characteristic), 
the conjecture has been proven by Fedorov--Panin \cite{FP,PSV}. When  
${\mathfrak G} = \G \times_k R$ for some reductive $k$-group $\G$, the so called ``constant" case,   was established before by 
Colliot-Th\'el\`ene and Ojanguren \cite{CTO}. For our considerations we need a similar result
for group schemes which are not necessary ``connected".

\begin{lemma}\label{devissage}
Let $R$ be a regular  local ring with fraction field
$K$. Let ${\mathfrak G}$ be an affine smooth group scheme over $R$
 which is an extension of a finite  twisted constant group scheme $\bF$ over $R$ by a
reductive group scheme ${\mathfrak G}^0$ over $R$.
Assume that Grothendieck--Serre's conjecture holds for ${\mathfrak G}^0$.
Then the map $H^1(R, {\mathfrak G}) \to H^1(K, {\mathfrak G})$ has trivial kernel.
\end{lemma}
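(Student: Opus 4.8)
The plan is to reduce the statement for $\mathfrak{G}$ to the known case for its reductive part $\mathfrak{G}^0$ by exploiting the extension
\[
1 \to \mathfrak{G}^0 \to \mathfrak{G} \to \bF \to 1.
\]
Let $[E] \in \ker\bigl( H^1(R, \mathfrak{G}) \to H^1(K, \mathfrak{G})\bigr)$; I want to show $[E]$ is trivial. The natural first step is to push $E$ forward along $\mathfrak{G} \to \bF$, obtaining a class in $H^1(R, \bF)$. Since $\bF$ is a finite twisted constant group scheme over the \emph{regular local} ring $R$, torsors under $\bF$ correspond to finite \'etale covers, and $H^1(R,\bF) \to H^1(K,\bF)$ is injective because $R$ is normal (a connected finite \'etale cover is determined by its generic fibre). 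As the image of $[E]$ in $H^1(K,\mathfrak{G})$ is trivial, so is its further image in $H^1(K,\bF)$; hence the image of $[E]$ in $H^1(R,\bF)$ is already trivial.

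The next step is a twisting-and-exactness argument. Having trivialized the image of $[E]$ in $H^1(R,\bF)$, the class $[E]$ lifts to the fibre of $H^1(R,\mathfrak{G}^0) \to H^1(R,\mathfrak{G})$ over the base point, using the exact sequence of pointed sets
\[
H^1(R,\mathfrak{G}^0) \to H^1(R,\mathfrak{G}) \to H^1(R,\bF).
\]
So there is $[E^0] \in H^1(R,\mathfrak{G}^0)$ mapping to $[E]$. I would then track $[E^0]$ over $K$: its image in $H^1(K,\mathfrak{G})$ is trivial by hypothesis, so by the same exact sequence over $K$ the image $[E^0_K] \in H^1(K,\mathfrak{G}^0)$ comes from $H^0(K,\bF)/(\cdots)$, i.e. lies in the image of the connecting map from $\bF(K)$. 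Here I expect to need the torsion bijection of \S\ref{torsion} to move the base point: after twisting $\mathfrak{G}$, $\mathfrak{G}^0$ and $\bF$ by a suitable cocycle so that the relevant fibre becomes the distinguished one, the vanishing of $[E]$ over $K$ should force $[E^0_K]$ to be trivial in the twisted $H^1(K, {}^{E^0}\mathfrak{G}^0)$.

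The main obstacle is precisely this fibre analysis: exact sequences of \emph{pointed} sets only control preimages of the base point, so to conclude that $[E^0]$ is trivial I cannot argue naively with a single exact sequence but must twist by $E^0$ and apply Grothendieck--Serre to the twisted reductive group ${}^{E^0}\mathfrak{G}^0$, which is again reductive over $R$ and to which the hypothesis applies. Concretely, after twisting I reduce to showing that a class in $\ker\bigl(H^1(R,{}^{E^0}\mathfrak{G}^0) \to H^1(K,{}^{E^0}\mathfrak{G}^0)\bigr)$ is trivial, which is the Grothendieck--Serre property assumed for $\mathfrak{G}^0$ (stable under inner twisting). The delicate bookkeeping is to verify that triviality of $[E]$ over $K$, combined with triviality over $R$ of the image in $\bF$, really does land $[E^0_K]$ in the kernel of the twisted restriction map rather than merely in a larger fibre; this is where the finiteness of $\bF$ and the injectivity of $H^1(R,\bF)\to H^1(K,\bF)$ must be used a second time to pin down the $\bF(K)$-ambiguity. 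Once that is done, applying the assumed conjecture for $\mathfrak{G}^0$ (in its twisted form) finishes the proof.
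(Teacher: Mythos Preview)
Your opening moves match the paper's exactly: push $[E]$ to $H^1(R,\bF)$, use injectivity of $\lambda:H^1(R,\bF)\to H^1(K,\bF)$ to kill it, and lift $[E]$ to some $[E^0]\in H^1(R,\mathfrak{G}^0)$. The divergence is in how you dispose of the remaining ambiguity.

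The paper's argument is a direct diagram chase on
\[
\begin{CD}
\bF(R) @>>> H^1(R, {\mathfrak G}^0) @>>> H^1(R, {\mathfrak G}) @>>> H^1(R,\bF) \\
@V{\wr}VV @V{\eta}VV @V{\nu}VV @V{\lambda}VV \\
\bF(K) @>>> H^1(K,{\mathfrak G}^0) @>>> H^1(K,{\mathfrak G}) @>>> H^1(K,\bF),
\end{CD}
\]
showing that $\ker\eta$ surjects onto $\ker\nu$; since $\ker\eta=1$ by hypothesis, $\ker\nu=1$. The crucial second ingredient is not a repeat use of $\lambda$, as you suggest, but the isomorphism $\bF(R)\simlgr\bF(K)$ on the \emph{left}. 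Since $[E^0_K]$ lies in the $\bF(K)$--orbit of the trivial class (this is what exactness at $H^1(K,\mathfrak{G}^0)$ says), one lifts the relevant $f\in\bF(K)$ to $\tilde f\in\bF(R)$ and replaces $[E^0]$ by its translate under the $\bF(R)$--action on $H^1(R,\mathfrak{G}^0)$. The new lift still maps to $[E]$ (the $\bF(R)$--orbits are exactly the fibres of $H^1(R,\mathfrak{G}^0)\to H^1(R,\mathfrak{G})$) but now lies in $\ker\eta$. Only Grothendieck--Serre for $\mathfrak{G}^0$ itself is invoked.

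Your route through twisting by $E^0$ and invoking Grothendieck--Serre for ${}^{E^0}\mathfrak{G}^0$ has two defects. First, the hypothesis of the lemma is that the conjecture holds for $\mathfrak{G}^0$, not for its inner forms; ``stable under inner twisting'' is an extra assumption you are importing (harmless in the eventual applications via Fedorov--Panin, but not part of this statement). Second, the step ``the vanishing of $[E]$ over $K$ should force $[E^0_K]$ to be trivial in the twisted $H^1(K,{}^{E^0}\mathfrak{G}^0)$'' is tautological---under $\tau_{E^0}$ the class $[E^0_K]$ \emph{always} corresponds to the trivial class---and does not by itself exhibit a class in the kernel of the twisted restriction map whose triviality would yield $[E]=1$. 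To make your twisting argument go through you would in effect have to twist by $\delta_R(\tilde f)$ rather than by $E^0$, and at that point you have reproduced the $\bF(R)\cong\bF(K)$ step above, with an unnecessary appeal to Grothendieck--Serre for a twisted form tacked on.
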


\begin{proof} We consider the commutative exact diagram of pointed sets
$$
\begin{CD}
\bF(R) @>>> H^1(R, {\mathfrak G}^0) @>>> H^1(R, {\mathfrak G}) @>>> H^1(R,\bF) \\
@V{\wr}VV @V{\eta}VV @V{\nu}VV @V{\lambda}VV \\
\bF(K) @>>> H^1(K,{\mathfrak G}^0) @>>> H^1(K,{\mathfrak G}) @>>> H^1(K,\bF). \\
\end{CD}
$$
 Since $\lambda$ is injective, an easy diagram chase shows that ${\rm Ker}\,\eta$ 
 surjects onto ${\rm Ker}\,\nu$. But by hypothesis ${\rm Ker}\,\eta$ vanishes, so the assertion follows.
\end{proof}

As a corollary of Fedorov--Panin's theorem we get the following facts.

\begin{corollary}\label{cor_FP}
Let $R$ be a regular local ring
containing an infinite field with fraction field $K$.
Let ${\mathfrak G}$ be an affine smooth  group scheme over $R$
 which is an extension of a finite  twisted constant group scheme  over $R$ by a
reductive group scheme 
over $R$.
Then the natural map $H^1(R, {\mathfrak G}) \to H^1(K, {\mathfrak G})$ has trivial kernel.
\end{corollary}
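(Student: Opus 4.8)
The plan is to obtain Corollary \ref{cor_FP} as an immediate consequence of the d\'evissage in Lemma \ref{devissage}, by verifying that its sole hypothesis is supplied exactly by the Fedorov--Panin theorem recalled in \S\ref{sect_gr}. Concretely, the group scheme $\mathfrak{G}$ in the statement fits into an exact sequence
$$
1 \lra \mathfrak{G}^0 \lra \mathfrak{G} \lra \bF \lra 1
$$
of smooth affine $R$--group schemes, where $\mathfrak{G}^0$ is reductive over $R$ and $\bF$ is finite twisted constant. First I would observe that, since $R$ contains an infinite field and is regular local, the Grothendieck--Serre conjecture (Conjecture \ref{conjlocal}) holds for the reductive group scheme $\mathfrak{G}^0$ by \cite{FP,PSV}; that is, $H^1(R,\mathfrak{G}^0) \to H^1(K,\mathfrak{G}^0)$ has trivial kernel.

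With that verification in hand, the hypothesis of Lemma \ref{devissage} is satisfied, and I would simply invoke it: the conclusion that $H^1(R,\mathfrak{G}) \to H^1(K,\mathfrak{G})$ has trivial kernel follows directly. In other words, the corollary is nothing more than the combination ``Fedorov--Panin $+$ Lemma \ref{devissage}''. The only point that requires a word of care is that the map $\lambda : H^1(R,\bF) \to H^1(K,\bF)$ appearing in the diagram of Lemma \ref{devissage} be injective, since the diagram chase there rests on that injectivity; but this is a standard fact for finite twisted constant group schemes over a regular local ring (for a finite \'etale group, $H^1$ classifies torsors which correspond to continuous actions of the \'etale fundamental group, and the specialization map to the generic point is injective because $R$ is normal and $\bF$ is unramified). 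This injectivity is in any case already asserted within the proof of Lemma \ref{devissage}, so for the corollary I may take it as given.

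I do not anticipate a genuine obstacle here, as the corollary is purely a matter of assembling already-stated ingredients; the substantive content lives in Lemma \ref{devissage} (the diagram chase) and in the cited Fedorov--Panin theorem. If anything deserves attention, it is making sure the hypotheses line up precisely: one must check that ``reductive group scheme over $R$'' in the corollary's statement is exactly the class of groups for which \cite{FP,PSV} proves Grothendieck--Serre, which it is, since $R$ regular local containing an infinite field is precisely their setting. Thus the proof reduces to the single sentence that Grothendieck--Serre holds for $\mathfrak{G}^0$ by Fedorov--Panin, whence Lemma \ref{devissage} applies and gives the claim.
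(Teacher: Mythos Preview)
Your proposal is correct and matches the paper's approach exactly: the paper presents this corollary without a separate proof, introducing it as ``As a corollary of Fedorov--Panin's theorem we get the following facts'', i.e., Fedorov--Panin supplies the Grothendieck--Serre input for $\mathfrak{G}^0$ and then Lemma~\ref{devissage} gives the conclusion. Your additional remark on the injectivity of $\lambda$ is fine but, as you note, already absorbed into Lemma~\ref{devissage}.
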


\begin{corollary}\label{zar}
Let $X$ be an integral smooth affine variety over an infinite field with  function field $K$. 
Let ${\mathfrak G}$ be an affine smooth  group scheme over $X$
 which is an extension of a finite  twisted constant group scheme  over $X$ by a
reductive group scheme 
over $X$. Then the sequence of pointed sets 
$$
1\longrightarrow H^1_{Zar}(X,\GG)\longrightarrow H^1(X,\GG)\longrightarrow H^1(K,\GG)
$$
is exact.
\end{corollary}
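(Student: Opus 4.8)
The plan is to prove exactness at each of the two nontrivial spots: that the first map is injective (in particular has trivial kernel), and that the kernel of the second map coincides with the image of the first.

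For injectivity of $H^1_{Zar}(X,\GG)\to H^1(X,\GG)$ I would argue by representability. Since $\GG$ is affine and smooth, every $\GG$-torsor is representable, and an isomorphism of $\GG$-torsors is nothing but a $\GG$-equivariant isomorphism of $X$-schemes, a datum independent of the chosen topology. Hence two Zariski-locally trivial torsors that become isomorphic as \'etale torsors were already isomorphic as Zariski torsors, so the comparison map is injective. The inclusion of the image of $H^1_{Zar}(X,\GG)$ in the kernel of $H^1(X,\GG)\to H^1(K,\GG)$ is then immediate: a Zariski-locally trivial torsor is trivial on some member of a Zariski open cover, and the generic point $\Spec K$ lies in that member, so the restriction to $K$ is trivial.

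The substance of the proof is the reverse inclusion. Starting from $[E]\in H^1(X,\GG)$ whose restriction $E_K$ is trivial, I would show that $E$ is Zariski-locally trivial by trivializing it near each point $x\in X$. For fixed $x$, the local ring $O_{X,x}$ is regular because $X$ is smooth over a field, and it contains the infinite ground field; moreover $\GG\times_X O_{X,x}$ is again an extension of a finite twisted constant group scheme by a reductive group scheme. As $X$ is integral, the fraction field of $O_{X,x}$ is $K$, and the image of $E_{O_{X,x}}$ in $H^1(K,\GG)$ is trivial by hypothesis. Corollary~\ref{cor_FP} then forces $E_{O_{X,x}}$ to be trivial over $O_{X,x}$. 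Since $E$ is of finite presentation over $X$ (the variety $X$ being Noetherian), a standard spreading-out argument shows that a section of $E$ over $O_{X,x}=\varinjlim_{U\ni x}\Gamma(U,\mathcal{O}_X)$ is already defined over some Zariski open $U\ni x$, so that $E|_U$ is trivial. Letting $x$ range over $X$ produces a Zariski open cover trivializing $E$, whence $[E]$ lies in the image of $H^1_{Zar}(X,\GG)$.

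The main obstacle is this last inclusion, and the key external input is Corollary~\ref{cor_FP}, the Grothendieck--Serre-type statement valid for the (possibly disconnected) group schemes considered here; the passage from triviality over the local ring to triviality over a Zariski neighbourhood is routine and rests only on the affineness and finite presentation of $\GG$.
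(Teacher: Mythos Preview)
Your proof is correct and is precisely the argument the paper has in mind: the corollary is stated in the paper without proof, immediately after Corollary~\ref{cor_FP}, as a direct consequence of the latter applied at each local ring $O_{X,x}$, together with the standard spreading-out step. Your treatment of the injectivity of $H^1_{Zar}(X,\GG)\to H^1(X,\GG)$ via representability and the obvious inclusion of the image in the kernel are the routine points, and the substantive reverse inclusion is exactly the local application of Corollary~\ref{cor_FP} that the paper intends.
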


\subsection{Rational torsors everywhere locally defined}\label{sect_def_unr}

For  a smooth affine group scheme $\GG$ over an integral normal scheme $X$ 
Colliot-Th\'el\`ene and Sansuc  \cite[\S 6]{CTS}
introduced the following sets:
$$
D^{\mathfrak G}(X):={\rm Im}\Bigl( H^1(X,{\mathfrak G}) \to H^1( \kappa(X), {\mathfrak G}) \Bigr),
$$
$$
H^1(\kappa(X),{\mathfrak G})_{X-loc} := \bigcap\limits_{ x \in X}
\, D^{\mathfrak G}( {\cal O}_{X,x})
\enskip \subseteq \enskip H^1( \kappa(X), {\mathfrak G} ),
$$
and
$$
H^1(\kappa(X),{\mathfrak G})_{X-unr} := \bigcap\limits_{ x \in X^{(1)}}
\, D^{\mathfrak G}( {\cal O}_{X,x})
\enskip \subseteq \enskip H^1( \kappa(X), {\mathfrak G} ).
$$
Clearly, we have the inclusions 
$$D^{\mathfrak G}(X) \subseteq H^1(\kappa(X),{\mathfrak G})_{X-loc}\subseteq
H^1(\kappa(X),{\mathfrak G})_{X-unr}.$$
In our terminology introduced in \S\,\ref{section_sorites} the
two last sets are nothing but the local and unramified subsets with respect to $X$ 
attached to the functor $\calF$ given by $\calF(Y)= H^1(Y, {\mathfrak G})$ for each $X$-scheme $Y$.
Unramified classes have the following geometrical characterization.

\begin{lemma}\label{lem_unr} 
  Let $\gamma \in H^1(\kappa(X),{\mathfrak G})$. Then $\gamma \in 
H^1(\kappa(X),{\mathfrak G})_{X-unr}$ if and only if 
there exists an open subset $U$ of $X$ and a class $\widetilde \gamma \in H^1_{\et}(U, {\mathfrak G})$ 
such that 

\smallskip

{\rm (i)} $\gamma=(\widetilde \gamma)_{\kappa(X)}$; 

\smallskip

{\rm (ii)} $X^{(1)} \subset U$.

\end{lemma}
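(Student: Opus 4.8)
The plan is to prove the two implications separately; the nontrivial content is the construction of the extension $\widetilde\gamma$ out of an unramified class. The implication ($\Leftarrow$) is immediate. Suppose $U$ and $\widetilde\gamma \in H^1_{\et}(U,\GG)$ satisfy (i) and (ii), and fix $x \in X^{(1)}$. By (ii) we have $x \in U$, so $\mathcal{O}_{X,x} = \mathcal{O}_{U,x}$ and restriction along the canonical map $\Spec \mathcal{O}_{X,x} \to U$ produces a class in $H^1(\mathcal{O}_{X,x},\GG)$ whose image in $H^1(\kappa(X),\GG)$ equals $(\widetilde\gamma)_{\kappa(X)} = \gamma$ by (i). Hence $\gamma \in D^{\GG}(\mathcal{O}_{X,x})$, and as $x$ runs over $X^{(1)}$ we conclude $\gamma \in H^1(\kappa(X),\GG)_{X-unr}$.

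For the converse, assume $\gamma$ is unramified. First I would spread $\gamma$ out: since $\GG$ is smooth affine of finite presentation and $\kappa(X) = \varinjlim_{\emptyset \neq V \subseteq X} \mathcal{O}(V)$, a standard limit argument (as in \cite{EGAIV}) shows that $\gamma$ is the image of some $\gamma_0 \in H^1(U_0,\GG)$ for a nonempty, hence dense, open $U_0 \subseteq X$. The complement $Z = X \setminus U_0$ is a proper closed subset, and its codimension-one points are exactly the generic points of the codimension-one irreducible components of $Z$; there are only finitely many of them, say $x_1,\dots,x_m$, and $X^{(1)} \setminus U_0 = \{x_1,\dots,x_m\}$. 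It therefore suffices to enlarge $U_0$ one codimension-one point at a time, so as to eventually cover all of $X^{(1)}$ while retaining a class restricting to $\gamma$ at the generic point.

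So fix $x = x_i \notin U_0$. By the unramified hypothesis $\gamma \in D^{\GG}(\mathcal{O}_{X,x})$, and spreading out again yields an open $V \ni x$ together with $\gamma_V \in H^1(V,\GG)$ with $(\gamma_V)_{\kappa(X)} = \gamma$. Over the overlap $U_0 \cap V$ (nonempty, since both opens are dense) the classes $\gamma_0$ and $\gamma_V$ agree at the generic point, so the associated torsors are isomorphic there; spreading out this isomorphism — the scheme $\mathrm{Isom}_\GG(\cdot,\cdot)$ is of finite presentation over $U_0 \cap V$ and carries a $\kappa(X)$-point — gives a dense open $W \subseteq U_0 \cap V$ over which $\gamma_0$ and $\gamma_V$ coincide. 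The hard part will be to upgrade this agreement on a dense open to agreement on a full overlap, which is what a gluing requires. I would achieve it by replacing $V$ with $V' = V \setminus \overline{(U_0 \cap V) \setminus W}$. The crucial observation is that $x \notin \overline{(U_0 \cap V) \setminus W}$: the locally closed set $(U_0 \cap V)\setminus W$ is dense open in its own closure, so the generic points of all irreducible components of that closure lie in $U_0 \cap V \subseteq U_0$; since $\overline{\{x\}}$ has codimension one while $x \notin U_0$, a dimension count rules out $\overline{\{x\}}$ being contained in that closure. Thus $V'$ is an open neighbourhood of $x$ with $U_0 \cap V' \subseteq W$, on which $\gamma_0$ and $\gamma_V|_{V'}$ agree.

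Choosing an isomorphism of the two torsors over $U_0 \cap V'$, I would then glue $\gamma_0$ and $\gamma_V|_{V'}$ — there is no triple-overlap cocycle condition for a cover by two opens — to obtain a class over $U_0 \cup V'$ whose restriction to the generic point is again $\gamma$. Replacing $U_0$ by $U_0 \cup V'$ and iterating over $x_1,\dots,x_m$ (skipping any $x_i$ already absorbed into the enlarged open), after finitely many steps I obtain an open $U \supseteq U_0 \cup \{x_1,\dots,x_m\} \supseteq X^{(1)}$ carrying a class $\widetilde\gamma$ with $(\widetilde\gamma)_{\kappa(X)} = \gamma$. This furnishes (i) and (ii) and completes the proof. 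The main obstacle throughout is the gluing step, i.e.\ the disagreement-locus argument showing that the codimension-one point $x$ escapes the closure of the locus where two a priori distinct extensions differ.
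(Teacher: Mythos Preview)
Your argument is correct and supplies a full self-contained proof. The paper itself does not prove the nontrivial direction at all: it declares one implication obvious and for the other simply cites \cite[cor.~A.8]{GP2}. Your spreading-out and gluing construction is essentially the standard argument behind that citation, and your dimension argument showing $x \notin \overline{(U_0 \cap V)\setminus W}$ is the right mechanism: since $x\in X^{(1)}$, the only generizations of $x$ are $x$ and the generic point of $X$, and both are excluded.

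One point to make explicit: your claim that $X^{(1)}\setminus U_0=\{x_1,\dots,x_m\}$ is finite presupposes that the closed complement of a dense open has only finitely many irreducible components, i.e.\ effectively a Noetherian hypothesis on $X$. The paper states the lemma for an arbitrary integral normal $X$ but only ever applies it to $\Spec(R_n)$, so this is harmless for the applications; still, you should either add the Noetherian assumption or indicate how the iteration terminates without it.
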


\begin{proof} In one direction the statement is obvious.The other one was treated in \cite[cor. A.8]{GP2}.
\end{proof}

A special case of Lemma \ref{lem_funct} is then 
the following.

\begin{lemma}\label{lem_funct2}
Let $f:Y \to X$ be a dominant morphism of integral and normal $S$-schemes. Let 
$$\calF(f^*):  H^1( \kappa(X), {\mathfrak G} ) \to H^1( \kappa(Y), {\mathfrak G}_Y )$$ 
be the  map induced by the comorphism $f^*:\kappa(X)\to\kappa(Y)$. Then.

\smallskip
  
\noindent 
{\rm (1)}  $\calF(f^*)(H^1(\kappa(X),\GG)_{X-loc}) \, \subseteq \,  H^1(\kappa(Y),\GG)_{Y-loc}.$
 
\smallskip
  
\noindent 
{\rm (2)} If  $f$ is flat
then 
 $$\calF(f^*)(H^1(\kappa(X),\GG)_{X-unr}) \, \subseteq \, H^1(\kappa(Y),\GG)_{Y-unr}.$$

\end{lemma}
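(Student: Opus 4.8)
The plan is to deduce both statements directly from the general Lemma~\ref{lem_funct}, specialized to the functor attached to $\GG$. The essential work is not analytic but organizational: one must identify the correct functor and then match the two notions of ``local'' and ``unramified'' classes so that the present lemma becomes a verbatim instance of the general one.

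First I would take $X$ itself as the base and consider the $X$-functor $\calF$ defined on the category of $X$-schemes by $\calF(Z) = H^1(Z, \GG_Z)$, abbreviated $H^1(Z, \GG)$ as in \S\ref{subsec_terminology}. This is contravariant: a morphism $Z' \to Z$ of $X$-schemes induces the base-change map $H^1(Z, \GG) \to H^1(Z', \GG)$, and forgetting the distinguished base points yields a functor into sets, as required by Lemma~\ref{lem_funct}. Since $X$ and $Y$ are integral and normal by hypothesis, the subsets $\calF(\kappa(X))_{X-loc}$ and $\calF(\kappa(X))_{X-unr}$ of \S\ref{section_sorites} are defined; under this identification one has $\calF(\kappa(X)) = H^1(\kappa(X), \GG)$ and $\calF(\kappa(Y)) = H^1(\kappa(Y), \GG_Y)$, and the transition map $\calF(f^*)$ of Lemma~\ref{lem_funct} coincides with the base-change map appearing in the present statement.

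Next I would invoke the observation recorded in \S\ref{sect_def_unr}: the sets $H^1(\kappa(X), \GG)_{X-loc}$ and $H^1(\kappa(X), \GG)_{X-unr}$ introduced there, in terms of the images $D^{\GG}({\cal O}_{X,x})$, are precisely $\calF(\kappa(X))_{X-loc}$ and $\calF(\kappa(X))_{X-unr}$ for the functor $\calF$ above, and similarly over $Y$. With these identifications in hand, assertion (1) is exactly Lemma~\ref{lem_funct}(1) and assertion (2) is exactly Lemma~\ref{lem_funct}(2), the flatness hypothesis on $f$ being inherited unchanged (it is used in the general lemma only to run the DVR argument that forces the relevant local ring to be a discrete valuation ring).

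The only genuine obstacle is bookkeeping around the base: because $\GG$ lives over $X$, the functor $\calF$ is genuinely an $X$-functor, so I must view $f \colon Y \to X$ as a dominant morphism in the category of $X$-schemes, with $X$ as terminal object, before applying Lemma~\ref{lem_funct}. Once the functor and its local/unramified subsets are correctly matched to those of \S\ref{section_sorites}, there is no content beyond what Lemma~\ref{lem_funct} already supplies.
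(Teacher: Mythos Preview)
Your proposal is correct and follows exactly the paper's approach: the paper does not even give a separate proof, but simply observes (in the paragraph preceding the lemma) that the sets $H^1(\kappa(X),\GG)_{X-loc}$ and $H^1(\kappa(X),\GG)_{X-unr}$ are the local and unramified subsets for the functor $\calF(Y)=H^1(Y,\GG)$, and then states the lemma as ``a special case of Lemma~\ref{lem_funct}''. Your write-up just makes this identification explicit.
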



\begin{remark}
{\rm If $X$ is regular and the ``purity conjecture'' holds for  $\GG$ and local rings of $X$,  
i.e. $D_{\mathfrak G}(O_{X,x})= H^1(\kappa(X), {\mathfrak G})_{O_{X,x}-unr}$ for all points $x\in X$, 
then assertion (1) implies that  (2) holds without  flatness assumption for $f$.
}
\end{remark}


We now combine  earlier work by Colliot-Th\'el\`ene/Sansuc
and Nisnevich theorem~\cite{N}   on Grothendieck-Serre conjecture
for reductive groups over DVR.

\begin{proposition}\label{prop_special} 
Assume that $X$ is a regular integral scheme and that 
${\mathfrak G}$ is an extension of a finite twisted constant 
group scheme by a reductive group scheme ${\mathfrak G}^0$. 
Then

\smallskip

\noindent {\rm (1)} 
$D^{\mathfrak G}$  defines a contravariant
functor for the category of regular integral $X$-schemes.

\smallskip

\noindent 
{\rm (2)}  If $X=\Spec(k)$, then $H^1(\, \, ,{\mathfrak G})_{loc}$ 
defines a contravariant
functor for the category of smooth integral $k$--varieties.

\end{proposition}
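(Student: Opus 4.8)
The plan is to prove Proposition~\ref{prop_special} by reducing both statements to the functoriality already recorded in Lemma~\ref{lem_funct2} together with the Grothendieck--Serre type results for our class of (possibly disconnected) group schemes, namely Corollary~\ref{cor_FP} and Corollary~\ref{zar}. The key point is that, under the hypotheses of the proposition, the inclusion $D^{\GG}(O_{X,x}) \subseteq H^1(\kappa(X),\GG)_{O_{X,x}-unr}$ can be upgraded to an equality, or at least handled so that the image sets $D^{\GG}$ behave functorially for dominant morphisms of regular integral schemes.

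First I would establish (1). Let $f:Y\to X$ be a morphism of regular integral $X$-schemes; I want a well-defined map $D^{\GG}(X)\to D^{\GG}(Y)$ compatible with restriction to generic points. The natural candidate is base change: given a class in $H^1(X,\GG)$ I pull it back to $H^1(Y,\GG_Y)$, and the diagram of restrictions to generic fibers commutes by naturality. The content to check is that this is compatible with the identification of $D^{\GG}$ as a subset of $H^1(\kappa(X),\GG)$; since $\kappa(X)\to\kappa(Y)$ is the comorphism and the square relating $H^1(X,\GG)$, $H^1(Y,\GG_Y)$, $H^1(\kappa(X),\GG)$, $H^1(\kappa(Y),\GG_Y)$ commutes, the image of a class lands in $D^{\GG}(Y)$. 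The only subtlety is that a class in $H^1(\kappa(X),\GG)$ lying in $D^{\GG}(X)$ must have a \emph{unique} preimage-compatible image, which follows because by Corollary~\ref{zar} the localization map $H^1(X,\GG)\to H^1(\kappa(X),\GG)$ has the Zariski-local classes as its kernel, so $D^{\GG}(X)$ is a genuine subset and the induced map on these subsets is well defined.

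Next I would treat (2), the case $X=\Spec(k)$. Here $H^1(\,\cdot\,,\GG)_{loc}$ is the local subset attached to the functor $\calF(Y)=H^1(Y,\GG)$ on smooth integral $k$-varieties. By Lemma~\ref{lem_funct2}(1), applied with $S=\Spec(k)$, any dominant morphism $f:Y\to X$ of smooth integral $k$-varieties sends $H^1(\kappa(X),\GG)_{X-loc}$ into $H^1(\kappa(Y),\GG)_{Y-loc}$; for a general (not necessarily dominant) morphism between smooth integral $k$-varieties one passes through the image of $f$ and uses that a smooth $k$-variety has a well-behaved generic point, so the local subset is sent into the local subset by comorphism on the relevant function fields. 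Since $X=\Spec(k)$ is the terminal object and everything is a $k$-variety, the local condition is precisely intersection over all points, and functoriality of the intersection under the pullback maps of Lemma~\ref{lem_funct2}(1) gives the contravariant functor structure.

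The main obstacle I expect is in (1): ensuring that $D^{\GG}$ is functorial requires controlling the image of $H^1(O_{X,x},\GG)$ inside $H^1(\kappa(X),\GG)$ at codimension-one points under a morphism $f$ that need not be flat, so that Lemma~\ref{lem_funct2}(2) does not directly apply. The way around this is to invoke Nisnevich's theorem on the Grothendieck--Serre conjecture over a discrete valuation ring (as indicated just before the proposition) together with Corollary~\ref{cor_FP}: these guarantee that for the local rings at codimension-one points the map $H^1(O_{X,x},\GG)\to H^1(\kappa(X),\GG)$ is injective, so that $D^{\GG}(O_{X,x})$ is intrinsically the unramified subset and the purity-type equality from the Remark following Lemma~\ref{lem_funct2} lets us drop the flatness hypothesis. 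With injectivity in hand, the pullback of a locally-defined class remains locally defined, and the functoriality in (1) follows for arbitrary morphisms of regular integral $X$-schemes.
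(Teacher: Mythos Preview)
Your proposal has a genuine gap: it only really handles dominant morphisms, whereas functoriality must be defined for \emph{arbitrary} morphisms $f:Y\to X$ of regular integral $X$-schemes. When $f$ is not dominant there is no field map $\kappa(X)\to\kappa(Y)$ at all, so the ``commutative square relating $H^1(X,\GG)$, $H^1(Y,\GG_Y)$, $H^1(\kappa(X),\GG)$, $H^1(\kappa(Y),\GG_Y)$'' you invoke does not exist, and Lemma~\ref{lem_funct2} is inapplicable. Your remark in (2) about ``passing through the image of $f$'' does not resolve this: the image is typically a proper closed subscheme, and one must \emph{specialize} a class in $H^1(\kappa(X),\GG)$ to a class over the residue field at the generic point of that image. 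Whether such a specialization is well defined---independent of the choice of lift in $H^1(X,\GG)$---is precisely the content of the proposition, and your argument via Corollary~\ref{zar} does not address it (that corollary controls the kernel of restriction, not the fibers relevant to specialization).

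This is where Nisnevich's theorem enters, but not as a purity statement. Injectivity of $H^1(A,\GG)\to H^1(K,\GG)$ for a DVR $A$ (extended to our $\GG$ via Lemma~\ref{devissage}) does not yield $D^{\GG}(O_{X,x})=H^1(\kappa(X),\GG)_{O_{X,x}-unr}$; rather, it says that two lifts to $H^1(A,\GG)$ of the same $K$-class coincide, hence have the same \emph{reduction} over the residue field, so the specialization map $D^{\GG}(A)\to H^1(\kappa,\GG)$ is well defined. The paper's proof then cites \cite[6.6.1, 6.6.3]{CTS} and \cite[prop.~2.1.10]{CT}, which package exactly this specialization property (extended to arbitrary valuation rings) into the desired functoriality of $D^{\GG}$ and of $H^1(\,\cdot\,,\GG)_{loc}$. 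A secondary problem: Corollaries~\ref{cor_FP} and~\ref{zar} require the presence of an infinite ground field, while part~(1) is stated for an arbitrary regular integral $X$; the paper avoids this by working directly with Nisnevich's theorem, which is valid over any DVR.
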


\begin{proof} 
Nisnevich's theorem states that  if $A$ is a DVR and ${\bf G}$ is a reductive group
over $A$ then the natural map $H^1(R,{\bf G})\to H^1(K,{\bf G})$, where $K$ is a fraction field
of $A$, is injective. 
By Lemma \ref{devissage}, it holds more generally  for a group $\bG$ which is an extension of 
a finite twisted  constant group
by a reductive group.
In particular, if $\gamma_1,\gamma_2 \in H^1(A,{\bf G})$ have the same 
image in $H^1(K,{\bf G})$, 
they have the same specialization modulo the maximal ideal of 
$A$. By \cite[6.6.1]{CTS}, this specialization property holds more generally over
an arbitrary  valuation ring. Then the assertions follow from \cite[6.6.3]{CTS} 
and \cite[proposition 2.1.10]{CT}.
%
%
%
%
\end{proof}


\section{Proof of Theorem~\ref{main}}\label{section_proof}

Let $\GG$ be a smooth affine group scheme over $R_n$ satisfying condition (i)
or (ii)
in Theorem \ref{main}. 
Clearly,
$$
{\rm Im}\,[H^1( R_n,\GG) \to H^1(K_n,\GG)] \, \subset \,  H^1(K_n,\GG)_{R_n-unr},
$$
so that we have
the factorization
$$
\leqno{\bf (*)} \qquad \quad 
H^1_{ toral}( R_n,\GG) \stackrel{\phi}{\longrightarrow} H^1(K_n,\GG)_{R_n-unr} 
\stackrel{\psi}{\longrightarrow} H^1(F_n,\GG).
$$
The Acyclicity Theorem~\ref{acyclicity} states that the composite map $\psi \circ \phi$
is bijective;
in particular, $\phi$ is injective and $\psi$ is surjective.


\begin{lemma}\label{equivalence} The following are equivalent.

\smallskip

\noindent
{\rm (i)} $\phi$ is bijective.

\smallskip

\noindent
{\rm (ii)} The  map 
$$
^E\psi:H^1(K_n, {^E\GG})_{R_n-unr} \longrightarrow H^1(F_n,{^E\GG})
$$ has trivial kernel 
for all toral $R_n$--torsors $E$ under $\GG$.
\end{lemma}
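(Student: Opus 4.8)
The plan is to prove the equivalence by exploiting the torsion bijections (from \S\ref{torsion}) to reduce statement (i), which concerns a single map $\phi$ at the level of $\GG$, to a statement about kernels of the twisted maps $^E\psi$ ranging over all toral torsors $E$. The key observation is that we already know from the factorization \textbf{(*)} that $\psi \circ \phi$ is bijective, hence $\phi$ is always injective. So the content of the lemma is entirely about \emph{surjectivity} of $\phi$, and the torsion bijection is precisely what converts a surjectivity statement into a collection of triviality-of-kernel statements, one for each base point.

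First I would recall the compatibility of the torsion bijections with base change. For a fixed toral $R_n$-torsor $E$ under $\GG$, the torsion maps $\tau_E$ give bijections $H^1(R_n, {^E\GG}) \simlgr H^1(R_n, \GG)$, $H^1(K_n, {^E\GG}) \simlgr H^1(K_n, \GG)$ and $H^1(F_n, {^E\GG}) \simlgr H^1(F_n, \GG)$, and these are compatible with the restriction maps to $K_n$ and to $F_n$ (since torsion is functorial in the base). Moreover $\tau_E$ carries the unramified subset $H^1(K_n, {^E\GG})_{R_n-unr}$ to $H^1(K_n, \GG)_{R_n-unr}$ and restricts to a bijection on toral classes, by the remark following the definition of $\tau_E$ in \S\ref{torsion}. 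Consequently, for each $E$ we obtain a commutative diagram of the same shape as \textbf{(*)} with $\GG$ replaced by $^E\GG$, in which the maps $^E\phi$ and $^E\psi$ correspond under the vertical torsion bijections to the restrictions of $\phi$ and $\psi$ to the fibres sitting over $[E]$.

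The heart of the argument is then the standard ``twisting'' principle for injectivity of maps of pointed sets: a map $\phi$ of pointed sets (here enriched by the presence of the surjective left inverse data) is \emph{bijective} if and only if, after twisting by every element in the source, the corresponding map has trivial kernel. Concretely, I would argue that $\phi$ fails to be injective at two classes with the same image exactly when, choosing $E$ to be one of them as base point, the twisted map $^E\phi$ has a nontrivial element in the kernel over the distinguished point $[E]$; and since $^E\psi \circ {^E\phi}$ is bijective (the Acyclicity Theorem applies equally to $^E\GG$, which again satisfies hypothesis (i) or (ii)), a nontrivial kernel element of $^E\phi$ produces and is produced by a nontrivial kernel element of $^E\psi$. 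Quantifying over all toral $E$ then yields the equivalence: $\phi$ is bijective iff each $^E\psi$ has trivial kernel.

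The main obstacle I anticipate is purely bookkeeping with pointed sets rather than anything deep: one must be careful that ``trivial kernel'' for the map $^E\psi$ genuinely captures the local injectivity of $\psi$ at the base point, and that the torsion bijections interchange the three relevant maps in the way claimed. In particular, I would need to verify that $^E\psi$ really is the map obtained from $\psi$ by transport of structure along $\tau_E$, so that its kernel (the preimage of the distinguished point of $H^1(F_n, {^E\GG})$) corresponds precisely to the $\psi$-fibre over the image of $[E]$ intersected with the $\phi$-image. Once this identification is in place, the surjectivity of $\phi$ at $[E]$ — equivalently the bijectivity of $\phi$, since injectivity is already known — is exactly the assertion that this fibre is a single point, i.e. that $^E\psi$ has trivial kernel. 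The Acyclicity Theorem does all the work of guaranteeing that $^E\psi$ is surjective and that $^E\psi \circ {^E\phi}$ is bijective, so no further hard input is required beyond the formal diagram chase.
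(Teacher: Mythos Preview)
Your approach is the same as the paper's: use the torsion bijections $\tau_E$ to build a commutative diagram relating $(\phi,\psi)$ to $({}^E\phi,{}^E\psi)$, invoke the Acyclicity Theorem for ${}^E\GG$, and translate bijectivity of $\phi$ into triviality of $\ker({}^E\psi)$ for each toral $E$. The overall strategy is correct.

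However, your ``heart of the argument'' paragraph is muddled. You write that ``$\phi$ fails to be injective at two classes with the same image exactly when \dots\ the twisted map ${}^E\phi$ has a nontrivial element in the kernel'', and that ``a nontrivial kernel element of ${}^E\phi$ produces and is produced by a nontrivial kernel element of ${}^E\psi$''. But you already observed that $\phi$ (and hence each ${}^E\phi$) is injective, so $\ker({}^E\phi)$ is always trivial; this part of the argument is vacuous as stated. The issue is \emph{surjectivity} of $\phi$, equivalently \emph{injectivity} of $\psi$. The correct translation is: given $\gamma \in H^1(K_n,\GG)_{R_n\text{-}unr}$, use surjectivity of $\psi\circ\phi$ to choose a toral $E$ with $\psi(\phi([E]))=\psi(\gamma)$; then under $\tau_E$ the class $\gamma$ goes to an element of $\ker({}^E\psi)$, and $\gamma=\phi([E])$ iff that element is trivial. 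Also, in your last paragraph the phrase ``the $\psi$-fibre over the image of $[E]$ intersected with the $\phi$-image'' is off: the kernel of ${}^E\psi$ corresponds to the \emph{entire} $\psi$-fibre through $\phi([E])$, not just its intersection with $\mathrm{Im}(\phi)$ --- indeed, the whole point is to show these coincide. With these corrections your argument goes through and matches the paper's.
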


\begin{proof}
$(i) \Longrightarrow (ii)$: Assume  that $\phi$ is bijective. The above
factorization $(*)$ and the bijectivity of $\psi\circ\phi$
yield that we have bijections  
$$H^1_{ toral}( R_n,\GG)  \simlgr H^1(K_n,\GG)_{R_n-unr} \simlgr H^1(F_n,\HH).$$
Let now $E$ be a toral $R_n$--torsor under $\GG$. Recall  that the torsion bijection map 
$$
\tau_E: H^1( R_n,{^E\GG}) \simlgr H^1( R_n,\GG)$$ induces a bijection 
$$
H^1_{ toral}( R_n,{^E\GG}) \simlgr H^1_{ toral}( R_n,\GG)$$ (see \S\,\ref{torsion}).
The commutative diagram of torsion bijections
$$
 \xymatrix{
H^1_{ toral}( R_n,\GG) \ar[r]^{\sim} &  H^1(K_n,\GG)_{R_n-unr} \ar[r]^{\sim} & H^1(F_n,\GG) \\ 
H^1_{ toral}( R_n,{^E\GG}) \ar[r] \ar[u]^{\wr} &  H^1(K_n,{^E\GG})_{R_n-unr} \ar[r]^{^E\psi} \ar[u]^{\wr} & 
H^1(F_n,{^E\GG}) \ar[u]^{\wr} \\ 
}
$$
shows that $^E\psi$ is bijective, so a fortiori has trivial kernel.

\smallskip

\noindent $(ii) \Longrightarrow (i)$: We have noticed above  that $\phi$ is injective, 
hence it remains to prove its surjectivity only. Let $[E']\in H^1(K_n,\GG)_{R_n-unr}$. Since $\psi\circ\phi$
is bijective there exists a class $[E]\in H^1(R_n,\GG)$ such that $\psi([E'])=\psi(\phi([E]))$.
It follows from the above commutative diagram that under the torsion bijection 
$H^1(K_n,{^E\GG})_{R_n-unr}\to H^1(K,\GG)_{R_n-unr}$ the class $[E']$ corresponds to an element
in $H^1(K_n,{^E\GG})_{R_n-unr}$ lying in the kernel of $^E\psi$. Since by our hypothesis ${\rm Ker}({^E\psi})=1$,
this implies that the class $[E']$ corresponds to the trivial one in $H^1(K_n,{^E\GG})_{R_n-unr}$
or equivalently $\phi([E])=[E']$.
%
\end{proof}


Note that  hypothesis (i) and (ii) in Theorem~\ref{main} 
are stable with respect to twisting by a toral $R_n$-torsor under  $\GG$. Therefore
the above lemma reduces the proof of Theorem~\ref{main}
to showing that for all group schemes $\HH$ over $R_n$ satisfying conditions (i) or (ii) in 
Theorem~\ref{main} a natural map
$$\psi: H^1(K_n, \HH)_{R_n-unr} \longrightarrow H^1(F_n,\HH)$$ 
has trivial kernel.  
To prove  this fact we  proceed  by induction on $n\geq 1$ by allowing the base field $k$ to vary.

\smallskip

\noindent{$n=1$:}   Since we are in dimension one, by Lemma \ref{lem_unr}  
the map   
$$H^1( R_1, \HH) \longrightarrow H^1( K_1, \HH)_{R_1-unr}$$ is onto. Therefore, 
 Lemma~\ref{equivalence} applied to the group scheme $\GG=\HH$ and the trivial torsor $E=1$ shows that  
$\psi$ has trivial kernel.


\smallskip

\noindent{$n\geq 2$:} Consider the following field tower: 
$$K_n\subset F_{n-1}(t_n)\subset F_n.$$
Let $\gamma\in {\rm Ker}(\psi)$ and let
$\gamma'$ be its image in $H^1( F_{n-1}(t_n), \HH)$. 
Since  the morphism of affine schemes $\Spec( F_{n-1}[t_n^{\pm 1}]) \to \Spec(R_n)$ is flat and dominant,
by Lemma \ref{lem_funct2}\,(2) 
we have
$$
\gamma' \in H^1\bigl( F_{n-1}(t_n), \HH\bigr)_{F_{n-1}[t_n^{\pm 1}]-unr}  .
$$
Since $F_n=F_{n-1}((t_n))$ and $\gamma_{F_n}=1$, 
we then conclude that
$$
\gamma' \in  {\rm Ker}\Bigl( H^1\bigl( F_{n-1}(t_n), \HH \bigr)_{F_{n-1}[t_n^{\pm 1}]-unr}  \to H^1\bigl(F_{n-1}((t_n)),\HH \bigr) \Bigr).
$$
But according to the case $n=1$ (applied to the base field $F_{n-1}$) the last kernel is trivial. 
Thus  $\gamma'=1$, i.e. 
$$
\leqno{(**)} \qquad \qquad 
\gamma \in {\rm Ker}\Bigl( H^1\bigl( K_n, \HH)_{R_n-unr} \to H^1( F_{n-1}(t_n), \HH \bigr) \Bigr) .
$$
Now, we observe that the field  $F_{n-1}(t_n)= k((t_1))\dots ((t_{n-1}))(t_n)$
embeds into $k(t_n)((t_1))\dots((t_{n-1}))=k'((t_1))\dots((t_{n-1}))$ with $k'=k(t_n)$, 
so that we have a commutative diagram
$$
\begin{CD}
H^1\bigl( K_n, \HH \bigr)_{R_n-unr} @>>> H^1\bigl( F_{n-1}(t_n), \HH \bigr) \\
\cap  && @VVV \\
 H^1\bigl( k'(t_1,...,t_{n-1}),  \HH \bigr)_{R_{n-1} \otimes_k k'-unr} @>>> H^1\bigl( k'((t_1))\dots ((t_{n-1})) , \HH \bigr) ,
\end{CD}
$$
where the left vertical inclusion again is  due to Lemma \ref{lem_funct2}\,(2).
By the induction hypothesis applied to the base field $k'$,  
 the bottom horizontal map has trivial kernel.
Therefore the top horizontal map has trivial kernel as well.
By  $(**)$, this implies $\gamma=1$.

\section{Applications}\label{section_applications}

\subsection{A disjoint union decomposition for $R_n$--torsors}

We shall use now our Theorem~\ref{main} and Fedorov-Panin's theorem to prove Theorem~\ref{main1}. In fact we 
will prove a little bit more general result by allowing $\GG$ to be  any group scheme
 satisfying conditions of Theorem~\ref{main}.

\begin{theorem}\label{cor_main} Let $\GG$ be as in Theorem \ref{main}. Then there is a natural bijection
$$
\bigsqcup_{[E] \in H^1_{ toral}(R_n, \GG) }   H^1_{Zar} (R_n, {^{E}\GG}) \enskip  \buildrel{\Theta} \over \simlgr
\enskip H^1_{}(R_n, \GG).
$$
\end{theorem}
\begin{proof}
Recall first that the torsion bijection $\tau_E$ (see \S\,\ref{torsion}) allows us to embed 
$$
H^1_{Zar} (R_n, {^{E}\GG})
\hookrightarrow H^1_{}(R_n, {^{E}\GG})
\buildrel \tau_{E} \over \simlgr  H^1(R_n, \GG).
$$
and this, in turn, induces a natural map
$$
\Theta: \bigsqcup_{[E] \in H^1_{toral}(R_n, \GG) }   H^1_{Zar} (R_n, {^{E}\GG}) \enskip  \to 
\enskip H^1_{\et}(R_n, \GG).
$$
\noindent{\it Surjectivity of $\Theta$.}
Assume $[\gamma] \in H^1(R_n, \GG)$. Since the generic class $\gamma_{K_n} \in H^1(K_n, \GG)$ is 
$R_n$--unramified by
Theorem~\ref{main}  there is a unique toral class $[E] \in H^1(R_n , \GG)_{toral}$
such that $[E]_{K_n}=\gamma_{K_n}$. Consider the following commutative diagram 
$$
\begin{CD}
& &H^1(R_n, \GG) @>>> H^1(K_n, \GG) \\
& &@A{\tau_{E}}A{\wr}A   @A{\tau_{E} }A{\wr}A \\
H^1_{Zar}(R_n,{^E\GG}) 
& \,\,\stackrel{\iota}{\hookrightarrow}\,\, & H^1(R_n, {^{E}\GG}) @>>> H^1(K_n, {^{E}\GG})
\end{CD}
$$
with an exact  bottom horizontal line (see Corollary~\ref{zar}).
It follows from the diagram that that 
$\tau_{E}^{-1}(\gamma) \in {\rm Im}\,\iota$.
Hence there exists a unique class $\eta \in H^1_{Zar}(R_n, {^{E}\GG}) $ such that 
$\eta=\tau_{E}^{-1}(\gamma)$. By construction, $\Theta(\eta)=\gamma$.

\smallskip

\noindent{\it Injectivity of $\Theta$.} Let $E,E'$ be two toral torsors under $\GG$ and
let $$
\eta \in H^1_{Zar}(R_n, {^E\GG}),\ \  \eta' \in H^1_{Zar}(R_n, {^{E'}\GG})$$
be such that $\Theta(\eta)=\Theta(\eta')$, i.e. $\tau_{E}( \eta)= \tau_{E'}( \eta') \in H^1(R_n, \GG)$. 
Since $\eta$ is locally trivial in the Zariski topology we conclude that
$$\tau_{E}( \eta)_{K_n}= \tau_{E}( 1)_{K_n}=[E]_{K_n}$$ and similarly for $E'$. It follows that 
 $$
 [E]_{K_n}=[E']_{K_n} \in H^1(K_n, \GG)_{R_n-unr}$$
 and therefore, by Theorem~\ref{main},  we have $[E]=[E']$. But $\tau_E=\tau_{E'}$ is bijective.
 Therefore, $\eta=\eta'$.
\end{proof}

\smallskip

\begin{example}{\rm 
 Let $\bG$ be a reductive $k$--group with the property that all of its semisimple quotients are isotropic
 (for example, $k$-split). 
By a result of Raghunathan \cite[th. B]{R} one has
$$
H^1_{Zar}( {\bf A}_k^n, \bG)=1$$ for all $n \geq 0$.
According to \cite[prop. 2.2]{GP2},  any Zariski locally trivial $\bG$-torsor over $R_n$  
can be extended 
to a $\bG$-torsor over ${\bf A}_k^n$.
Therefore  $H^1_{Zar}(R_n, \bG)=1$. 
This implies that the map $H^1(R_n,\bG) \to H^1(K_n,\bG)$ has trivial kernel.
In other words,  rationally trivial $R_n$--torsors under $\bG$ are trivial.
}
\end{example}

We expect that the similar result holds in a more general case.

\begin{conjecture}
Let $\GG$ be a loop reductive group scheme over $R_n$. 
If  all semisimple quotients of $\GG$ are isotropic,
then $H^1_{Zar}(R_n,\GG)=1$.
\end{conjecture}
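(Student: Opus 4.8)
The plan is to recast the conjecture as a Zariski local-to-global principle and then to seek a twisted analogue of the argument used in the Example. First I would invoke Corollary~\ref{zar} with $X=\Spec(R_n)=\bbG_m^n$, which is integral, smooth and affine over the infinite field $k$ (recall $\Char k=0$). This yields the exact sequence of pointed sets
$$
1\longrightarrow H^1_{Zar}(R_n,\GG)\longrightarrow H^1(R_n,\GG)\longrightarrow H^1(K_n,\GG),
$$
so that $H^1_{Zar}(R_n,\GG)$ is identified with the kernel of restriction to the generic fibre; equivalently, the conjecture asserts that every rationally trivial $\GG$-torsor over $R_n$ is trivial. By Fedorov--Panin (\cite{FP}; see Conjecture~\ref{conjlocal}, which applies since $\GG$ is reductive), such a torsor is automatically trivial over each regular local ring $O_{R_n,x}$. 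Hence the entire content of the conjecture is the passage from triviality at every local ring to triviality over $R_n$ itself, and it is exactly here that the isotropy hypothesis must be used.

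In the untwisted case this passage is Raghunathan's theorem: the Example deduces $H^1_{Zar}(R_n,\bG)=1$ for isotropic $\bG$ from the vanishing $H^1_{Zar}(\bbA^n_k,\bG)=1$ of \cite{R} together with the extension result \cite[prop.~2.2]{GP2}. The plan is to imitate this mechanism for a general loop reductive $\GG$. Writing $\GG={}^E(\bG\times_k R_n)$ with $\bG$ a split reductive $k$-group and $E$ a loop torsor (\cite[cor.~6.3]{GP3}), I would use the isotropy of the semisimple quotients of $\GG$ to exhibit a proper parabolic $R_n$-subgroup scheme $\PP\subseteq\GG$ with non-trivial unipotent radical $\UU$. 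Since $\UU$ is an iterated extension of vector group schemes, whose $H^1$ vanishes over the affine base $R_n$, the unipotent directions produced by isotropy are the natural source of the elementary operations needed to trivialize the transition cocycles of a Zariski-locally trivial torsor, playing the role that rationality of isotropic groups plays in Raghunathan's proof.

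A complementary route, more in keeping with the loop formalism, is to trivialize the twisting itself. The loop torsor $E$ becomes trivial over the standard finite \'etale Galois cover of $R_n$ obtained by adjoining a primitive root of unity to $k$ and $m$-th roots of the $t_i$, over which $\GG$ is isomorphic to the constant isotropic group $\bG$; the Example then gives triviality upstairs, and one would try to descend. I would combine either approach with an induction on $n$ (allowing $k$ to vary, as in the proof of Theorem~\ref{main}), the base case $n=1$ concerning the principal ideal domain $R_1=k[t^{\pm1}]$, where triviality holds at the generic point and, by Nisnevich's theorem, at each closed point.

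The hard part --- and the reason the statement is posed as a conjecture --- is the very first geometric input. A loop reductive group scheme has non-trivial monodromy around the coordinate divisors $t_i=0$, so in general it does \emph{not} extend to a reductive group scheme over $\bbA^n_k$; consequently \cite{R} cannot be invoked after an extension, and the reduction of the Example breaks down. Likewise the descent route is obstructed, since a torsor that is trivial over the cover and Zariski-locally trivial over $R_n$ need not be trivial over $R_n$. What is genuinely required is a twisted Raghunathan vanishing over $R_n$ for isotropic loop reductive groups, a statement which at present does not follow from the constant case and would presumably demand a geometric presentation / Lindel-type argument carried out compatibly with the loop structure.
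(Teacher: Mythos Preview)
The statement you are attempting to prove is a \emph{conjecture} in the paper; the authors give no proof, only the motivating Example immediately preceding it and the Remark following it. So there is no ``paper's own proof'' to compare against.

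Your proposal is not a proof, and you say so yourself in the final paragraph. The preliminary reductions you make are correct: Corollary~\ref{zar} does identify $H^1_{Zar}(R_n,\GG)$ with the kernel of restriction to $K_n$, and Fedorov--Panin does give local triviality at every point of $\Spec(R_n)$. But as you note, this only reformulates the conjecture as a local-to-global principle; it does not prove it. Your two proposed routes --- a twisted Raghunathan argument via parabolic/unipotent structure, and descent from a finite \'etale cover trivializing the loop cocycle --- are reasonable strategies, and your diagnosis of their obstructions is accurate: a loop reductive $\GG$ with non-trivial monodromy does not in general extend to a reductive group scheme over $\bbA^n_k$, so \cite{R} and \cite[prop.~2.2]{GP2} cannot be invoked as in the Example; and triviality over the Kummer cover together with Zariski-local triviality over $R_n$ does not by itself force triviality over $R_n$.

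In short, there is no error to correct and no alternative argument to contrast. You have correctly located the genuine difficulty --- the absence of a Raghunathan-type vanishing for isotropic loop reductive groups over $R_n$ --- which is precisely why the authors state this as a conjecture rather than a result.
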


\begin{remark} 
Note that Artamonov's freeness result \cite{A} as well as Parimala's result \cite{P} for quadratic forms over $R_2$ are particular special cases of this 
conjecture

\end{remark}

 Theorem~\ref{cor_main} gives a classification of all $\GG$-torsors that involves first
the description of all its toral torsors  and then to studying  locally trivial in Zariski topology  
torsors under its twisted toral forms. In the following two subsections we show how our theorem
works in particular
cases of orthogonal groups and projective linear groups.

\subsection{The case of  orthogonal groups}

Let $q_{split}$ be a split quadratic form over $k$ 
of dimension  $d \geq 1$ and let
${\bf O}(q_{split})$ be the corresponding split orthogonal group.
It is well-known that $H^1(R_n, {\bf O}(q_{split}))$ classifies non-singular  quadratic 
$R_n$--forms of dimension $d$ \cite[III.5.2]{DG}. This allows us to identify classes of torsors
under $\GG={\bf O}(q_{split,R_n})$ with classes of $d$-dimensional quadratic forms over 
the ring $R_n$.

\begin{proposition} For each subset $I \subseteq \{1,...,d\}$, we put 
$t_I= \prod_{i \in I} t_i\in R_n^{\times}$ with the convention $t_\emptyset = 1$.

\smallskip

\noindent {\rm (1)} Each class in $H^1_{toral}(R_n, {\bf O}(q_{split}))$ contains a unique $R_n$--quadratic form
of the shape
$$
  \bigoplus\limits_{I \subseteq \{1,...,d\}} \,  \langle  t_I \rangle \otimes q_{I, R_n}
$$
where all $q_I$'s with $I\not=\emptyset$ are non-singular anisotropic quadratic forms over $k$ such that 
%
%
%
%
%
$$d =\bigoplus\limits_{I \subseteq \{1,...,d\}} \, \dim(q_I).$$ 

%

\smallskip 

\noindent {\rm (2)} Let $q$ be a non--singular quadratic $R_n$--form of dimension $d$.
Then there exists a unique quadratic $R_n$-form $q_{loop}$ as in $(1)$
such that $q$ is a Zariski $R_n$--form of  $q_{loop}$.  
Furthermore, $q$ is isometric  to $q_{loop}$ if and only if
$q$ is diagonalizable.
\end{proposition}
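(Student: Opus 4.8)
The plan is to deduce the Proposition from the machinery already assembled, principally Theorem~\ref{main}, Theorem~\ref{cor_main}, and the Acyclicity Theorem~\ref{acyclicity}, specialized to $\GG = {\bf O}(q_{split,R_n})$. For part (1), I would first invoke the acyclicity bijection $H^1_{toral}(R_n, {\bf O}(q_{split})) \simlgr H^1(F_n, {\bf O}(q_{split}))$. The right-hand side classifies $d$-dimensional nonsingular quadratic forms over the iterated Laurent series field $F_n = k((t_1))\dots((t_n))$, and here I would use Springer's theory of quadratic forms over complete (iterated) discretely valued fields: over such a field every form decomposes, via residues, into a sum of one-dimensional twists $\langle t_I \rangle$ tensored with anisotropic forms $q_I$ pulled back from the residue field $k$ (which has characteristic zero, hence the relevant higher residue fields are again of this type). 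Concretely, iterating the first-residue decomposition $n$ times produces exactly the indexing by subsets $I \subseteq \{1,\dots,n\}$, where $t_I = \prod_{i\in I} t_i$ records which uniformizers appear, and the $q_I$ are the successive anisotropic residues. Uniqueness of the decomposition over $F_n$ (Springer's theorem gives a well-defined residue form up to isometry) then transports back through acyclicity to give the claimed unique normal form over $R_n$. The constraint $d = \sum_I \dim(q_I)$ is just the bookkeeping that the total dimension is preserved.

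For part (2), I would apply Theorem~\ref{cor_main} to $\GG = {\bf O}(q_{split,R_n})$. That theorem gives the disjoint union decomposition
$$
\bigsqcup_{[E] \in H^1_{toral}(R_n, \GG)} H^1_{Zar}(R_n, {^E\GG}) \; \simlgr \; H^1(R_n, \GG).
$$
A class $[q] \in H^1(R_n, \GG)$ corresponds to the quadratic form $q$ of dimension $d$. The decomposition says $[q]$ lands in exactly one block, indexed by a unique toral class $[E]$, and within that block it is a Zariski-locally-trivial torsor under ${^E\GG}$. By part (1) the toral class $[E]$ is represented by a unique form $q_{loop}$ of the stated shape, and $q$ being in the $[E]$-block means precisely that $q$ is a Zariski $R_n$-form of $q_{loop}$. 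This establishes the existence and uniqueness of $q_{loop}$ asserted in part (2).

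The final clause identifies when $q$ is actually isometric to $q_{loop}$, i.e.\ when the corresponding Zariski class is trivial. Here the key point is that $q$ is isometric to the diagonal-type form $q_{loop}$ exactly when $q$ itself is diagonalizable over $R_n$. One direction is immediate: $q_{loop}$ is visibly diagonalizable (it is an orthogonal sum of rank-one forms $\langle t_I \rangle$ tensored with diagonalizable anisotropic forms over the field $k$), so if $q \cong q_{loop}$ then $q$ is diagonalizable. For the converse I would argue that a diagonalizable form $q = \langle a_1, \dots, a_d \rangle$ with $a_i \in R_n^\times$ has each unit $a_i$ of the form (unit in $k^\times$) times a monomial $t_I$ up to squares, since $R_n^\times / (R_n^\times)^2 \cong k^\times/(k^\times)^2 \times (\bbZ/2)^n$; grouping the diagonal entries by the monomial part $t_I$ shows directly that $q$ is isometric to a form of the shape in (1), hence to $q_{loop}$ by the uniqueness already proved.

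I expect the main obstacle to be the proof of the normal form and its uniqueness in part (1): making precise the iterated Springer decomposition over $F_n$, correctly tracking the sign/twist data through the $n$ successive residue maps so that the subsets $I$ and the monomials $t_I$ emerge exactly as stated, and then ensuring the passage through the acyclicity bijection $H^1_{toral}(R_n,\GG) \simlgr H^1(F_n,\GG)$ respects this decomposition. The cohomological bookkeeping in parts (2) is comparatively formal once Theorem~\ref{cor_main} is in hand; the substantive quadratic-form input is entirely concentrated in the analysis of forms over $F_n$ and in the elementary computation of $R_n^\times$ modulo squares used for the diagonalizability criterion.
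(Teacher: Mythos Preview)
Your proposal is correct and follows essentially the same route as the paper: for (1) you use the acyclicity bijection to reduce to classifying forms over $F_n$ and then iterate Springer's decomposition, and for (2) you invoke Theorem~\ref{cor_main} together with the computation of $R_n^\times/(R_n^\times)^2$ for the diagonalizability criterion. The paper's only additional wrinkle is an explicit appeal to Witt cancellation to strip off the hyperbolic part before running the Springer induction (this is what allows $q_\emptyset$ to be isotropic while the $q_I$ with $I\neq\emptyset$ are anisotropic), but this is implicit in your description and causes no difficulty.
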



\begin{proof}
 (1) By Acyclicity Theorem~\ref{acyclicity} it suffices to compute $H^1(F_n,{\bf O}(q_{split}))$ 
 or equivalently isometry classes of $d$-dimensional quadratic forms over $F_n$.

Let $q$ be such quadratic form.  We want to show that it is as in (1). 
By the Witt theorem we may assume without loss of generality that $q$ is anisotropic.
 We proceed by induction on $n\geq 0$. The case $n=0$ is obvious. 
 Assume that  $n\geq 1$. Note that $F_n=F_{n-1}((t_n))$. Springer's decomposition \cite[\S 19]{EKM}, 
 then shows that $q \cong  q' \oplus \langle t_n \rangle \,  q''$ 
 where $q'$ and $q''$ are (unique) anisotropic quadratic forms over $F_{n-1}$.
 By induction on $n$, $q'$ and $q''$ are of the required form, hence we the assertion for $q$ follows. 
 The unicity is clear.

\smallskip

\noindent (2) The first assertion follows from Theorem~\ref{cor_main}.
If $q$ is isometric  to $q_{loop}$, then $q$ is diagonalizable since so is $q_{loop}$.
Conversely, assume that $q$ is diagonalizable: $q=\langle b_1,\ldots,b_d\,\rangle$.  Since 
$$
R_n^\times/ (R_n^\times)^2 \simlgr k^\times/ (k^\times)^2 \times 
\langle t_1^{\epsilon_1}\ldots t_n^{\epsilon_n}\ |\ \epsilon_1,\ldots,\epsilon_n=0,1\,\rangle.
$$
 all coefficients of $q$ are of the shape $b_i=a_i \, t_{I_i} $ with 
 $I_i\subseteq \{1,\ldots,d\}$ and $a_i\in k^{\times}$.
Then we can renumber $b_1,\ldots,b_d$   in such a way that $q$ is as in (1) and we are done.
\end{proof}
\begin{corollary} Toral classes in $H^1(R_n,{\bf O}(q_{split}))$ correspond to diagonalizable
$R_n$-quadratic forms.
\end{corollary}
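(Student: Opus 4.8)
The plan is to deduce this directly from the preceding Proposition, the only extra input being the elementary observation that the distinguished loop representatives are diagonal. First I record that, under the identification of $H^1(R_n, {\bf O}(q_{split}))$ with isometry classes of non-singular $d$-dimensional quadratic $R_n$-forms, diagonalizability is an isometry-invariant notion: any orthogonal basis transports along an isometry, so if $q \cong q'$ then $q$ is diagonalizable precisely when $q'$ is. Thus it is meaningful to speak of a \emph{class} being diagonalizable, and the assertion to be proven is that a class is toral if and only if its quadratic forms are diagonalizable.

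Next I would check that every form of the shape
$$
\bigoplus_{I \subseteq \{1,\ldots,d\}} \langle t_I \rangle \otimes q_{I,R_n}
$$
occurring in part (1) of the Proposition is diagonalizable. Since $\Char(k)=0$, each anisotropic $k$-form $q_I$ admits an orthogonal basis, say $q_I \cong \langle a_{I,1},\ldots,a_{I,m_I}\rangle$ with $a_{I,j} \in k^\times$; tensoring by the rank-one form $\langle t_I \rangle$ preserves diagonality, giving $\langle t_I\rangle \otimes q_{I,R_n} \cong \langle a_{I,1}t_I,\ldots,a_{I,m_I}t_I\rangle$, and the orthogonal sum over all $I$ exhibits $q_{loop}$ as a diagonal $R_n$-form.

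The two directions now follow at once. If the class is toral, then by part (1) of the Proposition it contains a representative of the above shape; that representative is diagonalizable by the previous paragraph, and since diagonalizability is isometry-invariant, \emph{every} form in the class is diagonalizable. Conversely, if $q$ is diagonalizable, part (2) of the Proposition furnishes the loop form $q_{loop}$ of the above shape of which $q$ is a Zariski form and asserts that $q$ is isometric to $q_{loop}$ exactly because $q$ is diagonalizable; as $q_{loop}$ has the shape of part (1), it represents a toral class, hence so does $q \cong q_{loop}$.

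There is essentially no obstacle here: the substantive work—the explicit loop normal form in part (1) and the isometry criterion in part (2)—has already been carried out in the Proposition, and the corollary amounts to noting that the normal forms of part (1) are precisely the diagonal ones. The only point I would be careful to state is that diagonalizability descends to classes (the isometry-invariance above), which is what lets the equivalence "$q$ isometric to its loop form $\iff q$ diagonalizable" of part (2) be reread as the intrinsic characterization of torality claimed in the corollary.
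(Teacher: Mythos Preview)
Your proof is correct and is precisely the argument the paper intends: the corollary is stated without proof as an immediate consequence of the Proposition, and you have spelled out the two implications exactly as they flow from parts (1) and (2). One small remark on the converse: when you write ``as $q_{loop}$ has the shape of part (1), it represents a toral class,'' the torality of $q_{loop}$ is really built into its construction in part (2) (via Theorem~\ref{cor_main}, which produces the unique \emph{toral} class of which $q$ is a Zariski form), rather than being a separate inference from its shape; but since part (1) establishes a bijection between toral classes and forms of that shape, your phrasing is also justified.
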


\subsection{The projective linear case}
Let $\GG= \PGL_d$.
The set $H^1(R_n, \PGL_d)$ classifies 
Azumaya $R_n$-algebras of degree $d$ \cite[prop. 2.5.3.13]{CF}. 
Recall also that if $\calA$ is 
an Azumaya algebra over $R_n$ of degree $d,$ 
then there is a one-to-one correspondence 
between commutative \'etale  $R_n$--subalgebras of $\calA$ of dimension $d$ and
maximal $R_n$--tori of the group scheme $\PGL_1(\calA).$ (This is a 
general well-known fact \cite[XIV.3.21.(b)]{SGA3}. Indeed, if $S$ is a commutative  \'etale $R_n$--subalgebra of $\calA$
of degree $d$ then $\TT=R_{S/R_n}({\bf G}_m)/{\bf G}_m$ is a maximal $R_n$--torus
of $\PGL_1(\calA)$. Conversely, to 
a maximal $R_n$-subtorus $\TT$ of $\PGL_1(\calA)$ one associates 
the $R_n$--subalgebra  $\calA^\TT$ of fixed points under the natural action of
$\TT$. This subalgebra $\calA^{\TT}$ has the required properties locally and hence globally).

From the above discussion it follows that $H^1_{toral}(R_n,\PGL_d)$ consists of isomorphism classes
of Azumaya $R_n$--algebras $\calA$ of degree $d$ having 
\'etale commutative $R_n$--subalgebras of dimension  $d$.


\smallskip

We pass to description of locally trivial torsors under $\PGL_1(\calA)$. 
\begin{lemma} Let $\calA$ be an Azumaya algebra over $R_n$.
 Then the natural map  $H^1_{Zar}(R_n, \GL_1(\calA)) \to  H^1_{Zar} (R_n, \PGL_1(\calA))$
 is bijective.
\end{lemma}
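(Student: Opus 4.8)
The plan is to exploit the central extension of $R_n$--group schemes
$$
1 \longrightarrow \bbG_m \longrightarrow \GL_1(\calA) \longrightarrow \PGL_1(\calA) \longrightarrow 1,
$$
in which $\bbG_m$ sits inside $\GL_1(\calA)$ as the centre. This is exact as a sequence of sheaves on the small Zariski site of $X=\Spec(R_n)$: a Zariski--local section of $\PGL_1(\calA)$ lifts to $\GL_1(\calA)$ after passing to a finer open cover, since the obstruction to lifting is a line bundle and hence locally trivial. The formalism of central extensions therefore furnishes an exact sequence of pointed sets
$$
H^1_{Zar}(R_n, \bbG_m) \longrightarrow H^1_{Zar}(R_n, \GL_1(\calA)) \longrightarrow H^1_{Zar}(R_n, \PGL_1(\calA)) \stackrel{\delta}{\longrightarrow} H^2_{Zar}(R_n, \bbG_m),
$$
and the whole argument reduces to computing the two outer terms.

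First I would record the vanishing statements $H^1_{Zar}(R_n,\bbG_m)=0$ and $H^2_{Zar}(R_n,\bbG_m)=0$. The first is simply $\Pic(R_n)=0$, which holds because $R_n$ is a unique factorization domain, being a localization of the polynomial ring $k[t_1,\dots,t_n]$. For the second I would use that $X=\Spec(R_n)$ is regular and integral, so that $\bbG_m$ admits the two--term flasque resolution
$$
1 \longrightarrow \bbG_m \longrightarrow \mathcal{K}^{\times} \longrightarrow \mathrm{Div}_X \longrightarrow 1,
$$
where $\mathcal{K}^{\times}$ is the constant sheaf $K_n^{\times}$ (flasque, as $X$ is irreducible) and $\mathrm{Div}_X=\bigoplus_{x\in X^{(1)}} (i_x)_*\bbZ$ is the flasque sheaf of Weil divisors. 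A flasque resolution of length one forces $H^i_{Zar}(R_n,\bbG_m)=0$ for all $i\ge 2$; in particular $H^2_{Zar}(R_n,\bbG_m)=0$.

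With these in hand, surjectivity of the map in the lemma is immediate: the boundary $\delta$ lands in $H^2_{Zar}(R_n,\bbG_m)=0$, so every class in $H^1_{Zar}(R_n,\PGL_1(\calA))$ lifts. For injectivity I would run the usual twisting argument. Given two classes $[E],[E']\in H^1_{Zar}(R_n,\GL_1(\calA))$ with the same image in $H^1_{Zar}(R_n,\PGL_1(\calA))$, I apply the torsion bijection $\tau_E$ of \S\ref{torsion}; because $\bbG_m$ is central it is unchanged by the twist, so $^E\GL_1(\calA)$ fits into a central extension by the same $\bbG_m$ with quotient the corresponding twist of $\PGL_1(\calA)$, and $\tau_E^{-1}([E'])$ maps to the base point at the $\PGL_1$ level. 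By exactness of the twisted sequence this class comes from $H^1_{Zar}(R_n,\bbG_m)=0$, hence equals the base point, giving $[E]=[E']$.

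The main obstacle is the $H^2$ vanishing: one must verify that the divisor sequence is genuinely a \emph{flasque} resolution on the Zariski site, which uses the regularity of $R_n$ so that Cartier and Weil divisors agree, and that the non--abelian central--extension sequence of pointed sets is valid in the Zariski topology, i.e. that $\GL_1(\calA)\to\PGL_1(\calA)$ is an epimorphism of Zariski sheaves. Both points are standard but deserve careful checking; once they are in place the bijectivity follows formally from $\Pic(R_n)=0=H^2_{Zar}(R_n,\bbG_m)$.
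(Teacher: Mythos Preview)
Your argument is correct and takes a genuinely different route from the paper.

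The paper works on the \'etale site: it runs the long exact sequence for $1\to\bbG_m\to\GL_1(\calA)\to\PGL_1(\calA)\to 1$ over $R_n$ and over $K_n$ simultaneously, invokes Fedorov--Panin to identify $H^1(R_n,\GL_1(\calA))$ with $H^1_{Zar}(R_n,\GL_1(\calA))$, and then uses injectivity of $\Br(R_n)\to\Br(K_n)$ to show that any Zariski--locally--trivial $\PGL_1(\calA)$--torsor has trivial Brauer obstruction and hence lifts. Injectivity is by the same twisting step you use.

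You stay entirely on the Zariski site and trade those two external inputs for two elementary vanishing statements: $\Pic(R_n)=0$ (UFD) and $H^2_{Zar}(R_n,\bbG_m)=0$ (the divisor flasque resolution on a regular scheme). Your exactness of the sequence as Zariski sheaves is justified, since over any local ring the obstruction to lifting a section of $\PGL_1(\calA)$ is a class in $\Pic$, which vanishes. What your approach buys is independence from the deep Fedorov--Panin theorem and from the $\Br$--injectivity (which, admittedly, is easy for regular schemes). What the paper's approach buys is coherence with the rest of the article, where the comparison to $K_n$ and the Grothendieck--Serre input are already in play, and it avoids having to set up the non-abelian boundary map into $H^2_{Zar}$. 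Both proofs are short; yours is the more self-contained.
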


\begin{proof} The exact sequence 
$$
1 \to {\rm G}_m \to \GL_1(\calA) \to \PGL_1(\calA) \to 1$$
gives rise to a commutative diagram with exact rows (of pointed sets)
$$
 \begin{CD}
1@>>>  H^1(R_n, \GL_1(\calA)) @>{\phi}>>  H^1(R_n, \PGL_1(\calA))@>> > \Br(R_n)  \\ 
 && @VVV @V{\psi_1}VV @V{\psi_2}VV\\
& & H^1(K_n,\GL_1(A))=1 @>>> H^1(K_n, \PGL_1(\calA))@>>> \Br(K_n) . 
\end{CD}
$$
Every class in $H^1(R_n,\GL_1(\calA)$ is rationally trivial, hence  by Fedorov--Panin's result \cite{FP,PSV} 
it is locally trivial in the Zariski topology. In other words
$$H^1(R_n,\GL_1(\calA))=H^1_{Zar}(R_n,\GL_1(\calA)).$$
Clearly 
$$
\phi(H^1_{Zar}(R_n,\GL_1(\calA))) \, \subseteq \,  H^1_{Zar}(R_n,\PGL_1(\calA)).
$$
Conversely, let $\gamma\in H^1_{Zar}(R_n,\PGL_1(\calA))$. Since $\psi_1(\gamma)=1$ and
since $\psi_2$ is injective, we obtain $\gamma\in {\rm Im}(\phi)$.

Finally, it remains to note that
the above diagram shows that $\phi$ has trivial kernel and this is true for all Azumaya algebras over $R_n$. 
Then the standard twisting argument enables us to conclude
that 
 $\phi$ is injective.
\end{proof}


Thus, the disjoint union decomposition of the set of isomorphism classes of torsors under $\PGL_d$ becomes  
$$
\bigsqcup_{[\calA] \in H^1_{ toral}(R_n, \PGL_d) }   
H^1_{Zar} (R_n, \GL_1(\calA)) \enskip  \buildrel{\Theta} \over \simlgr
\enskip H^1(R_n, \PGL_d).
$$
In general we can't say much about the subset $H^1_{Zar} (R_n, \GL_1(\calA))$.  
Recall only that it classifies right invertible $\calA$-modules
(so that the above decomposition is coherent with \cite[prop. 4.8]{GP2}).
More precisely, if $\calA$ is a toral Azumaya $R_n$--algebra and $\calL$ is 
a right invertible $\calA$-module, then the class $[\calL]$
corresponds to the class of the Azumaya algebra $\mathrm{End}_{\calA}(\calL)$ under the map $\Theta$.


However, when the base field $k$ is algebraically closed, toral Azumaya algebras 
over $R_n$ are easy to classify explicitly, and also  
much more information about Zariski trivial torsors is available 
due to Artamonov's freeness statements \cite{A}.

More precisely,  let $k$ be an algebraically closed field. 
Choose a coherent system of primitive roots of unity $(\zeta_n)_{n \geq 1}$ in $k$.
Given integers $r,s$ satisfying $1 \leq r \leq s$, we let  
$A(x,y)_r^s$ denote the Azumaya algebra of degree $s$ over the Laurent polynomial ring 
$k[x^{\pm 1}, y^{\pm 1}]$ defined by a presentation
$$
X^s=x, \, Y^s= y, \, YX= \zeta_s^r \,  XY.
$$

\begin{lemma}\label{division} Let $s_1,\ldots,s_m,r_1,\ldots r_m$  be positive integers such that
$(s_i,r_i)=1$ for all $i=1,\ldots,m$. Then the Azumaya algebra 
$$\calA=A(t_1,t_2)^{s_1}_{r_1} \otimes 
A(t_3,t_4)^{s_2}_{r_2} \otimes
\cdots A(t_{2m-1},t_{2m})^{s_m}_{r_m}$$ is a division algebra.
\end{lemma}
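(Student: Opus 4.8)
The plan is to reduce the statement to an index computation over the iterated Laurent series field and then to determine that index by induction on $m$ using the ramification (residue) map of the Brauer group.

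First I would fix notation: write $F=F_{2m}=k((t_1))\cdots((t_{2m}))$ and $K=K_{2m}=k(t_1,\dots,t_{2m})$, the fraction field of $R_{2m}$. Since $k$ is algebraically closed of characteristic zero it contains every $\zeta_{s_i}$, so Kummer theory is available; the coprimality $\gcd(r_i,s_i)=1$ guarantees that $\zeta_{s_i}^{r_i}$ is a \emph{primitive} $s_i$-th root of unity, and each factor is the symbol algebra $(t_{2i-1},t_{2i})_{\zeta_{s_i}^{r_i}}$. Thus $\calA$ is an Azumaya algebra over $R_{2m}$ of degree $s:=\prod_{i=1}^m s_i$ (a quantum torus with root-of-unity commutation data). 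To say $\calA$ is a division algebra is to say its generic fibre $\calA\otimes_{R_{2m}}K$ is a division $K$-algebra; and since $K$ embeds in $F$, it suffices to show $\calA\otimes_{R_{2m}}F$ is division, because a splitting $\calA\otimes K\cong \mathrm{M}_r(D)$ with $r>1$ would persist after the base change $K\hookrightarrow F$. As $\deg(\calA\otimes F)=s$, this is exactly the assertion $\operatorname{ind}(\calA\otimes F)=s$.

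I would argue by induction on $m$, the case $m=0$ being trivial. View $F=E((t_{2m}))$ with $E=F_{2m-1}=k((t_1))\cdots((t_{2m-1}))$, a complete discretely valued field of equal characteristic zero. Decompose the Brauer class
\[
[\calA\otimes F]=[B'\otimes F]+\bigl[(t_{2m-1},t_{2m})_{\zeta_{s_m}^{r_m}}\bigr],\qquad B':=\bigotimes_{i=1}^{m-1}A(t_{2i-1},t_{2i})^{s_i}_{r_i}.
\]
For the valuation $v_{2m}$ attached to $t_{2m}$, each factor of $B'$ involves only the units $t_1,\dots,t_{2m-2}$, so $B'$ is unramified with residue $[B']\in\Br(E)$. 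The last symbol is totally ramified: by the tame symbol formula its residue is the character $\chi_m$ cutting out the Kummer extension $L_m:=E(\sqrt[s_m]{t_{2m-1}})$; since $v_{2m-1}(t_{2m-1})=1$, the element $t_{2m-1}$ is not an $\ell$-th power in $E$ for any prime $\ell\mid s_m$, whence $[L_m:E]=s_m$ and $\chi_m$ has order exactly $s_m$. Then I would invoke the standard index formula for a ramified cyclic class over a complete discretely valued field: if $\beta\in\Br(E)$ is unramified and $\chi$ has order $e$ with fixed field $E_\chi$, then $\operatorname{ind}\bigl(\beta+(\chi,t_{2m})\bigr)=e\cdot\operatorname{ind}(\beta\otimes_E E_\chi)$. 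This gives
\[
\operatorname{ind}(\calA\otimes F)=s_m\cdot\operatorname{ind}\bigl([B']\otimes_E L_m\bigr).
\]
Finally $L_m=k((t_1))\cdots((t_{2m-2}))((u))$ with $u=\sqrt[s_m]{t_{2m-1}}$ is again iterated Laurent, and $B'$ (defined over $E_0:=k((t_1))\cdots((t_{2m-2}))$) is unramified for the $u$-adic valuation, so $\operatorname{ind}([B']\otimes L_m)=\operatorname{ind}([B']\otimes E_0)=\prod_{i<m}s_i$ by the induction hypothesis. Hence $\operatorname{ind}(\calA\otimes F)=s_m\prod_{i<m}s_i=s$ and $\calA\otimes F$, with it $\calA$, is division.

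The main obstacle is justifying the ramified-index formula, for which I would cite the valuation theory of Brauer groups over Henselian/complete discretely valued fields (Jacob--Wadsworth). An equivalent, residue-free route is to observe that $\calA\otimes F$ is a \emph{totally ramified} division algebra: the commutation pairing on $\tfrac1s\bbZ^{2m}/\bbZ^{2m}$ is block-diagonal, the $i$-th block pairing $\tfrac1{s_i}e_{2i-1}$ with $\tfrac1{s_i}e_{2i}$ to $\zeta_{s_i}^{r_i}$, and nondegeneracy of this alternating form is exactly the criterion for the corresponding tensor product of symbols to be a division algebra. The delicate points are identical in both approaches: each block has exact order $s_i$ (using $\gcd(r_i,s_i)=1$), and distinct blocks do not interfere (using that the symbols involve disjoint variables $t_{2i-1},t_{2i}$).
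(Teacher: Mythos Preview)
Your proposal is correct and follows precisely the strategy the paper indicates: the paper's proof is the single sentence ``using the residue method it is easy to see that it is a division algebra even over the field $F_{2m}=k((t_1))\ldots((t_{2m}))$'', and your argument is exactly a fleshed-out version of this residue/ramification induction over the iterated Laurent series field. The alternative route you sketch via the nondegenerate alternating commutation pairing (totally ramified valued division algebras \`a la Tignol--Wadsworth) is also standard and equivalent; either justification is acceptable here.
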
 
\begin{proof} Indeed, using the residue method it is easy to see that
it is a division algebra even over the field $F_{2m}=k((t_1))\ldots((t_{2m}))$.
\end{proof}

We next recall that the group $\GL_d(\Z)$ acts in a natural way on the ring $R_n$,
hence it acts on 
$H^1(R_n, \PGL_d)$ (for details see \cite[8.4]{GP3}).

\begin{theorem} Assume that $k$ is algebraically closed.
 
 \smallskip

\noindent 
{\rm (1)}  $H^1_{toral}(R_n, \PGL_d)$ consists of  $\GL_d(\Z)$--orbits of classes of $R_n$--algebras
of the following shape:
$$
\calA= M_{s_0}(R_n) \otimes_{R_n} A(t_1,t_2)^{s_1}_{r_1} \otimes A(t_3,t_4)^{s_2}_{r_2} \otimes
\cdots A(t_{2m-1},t_{2m})^{s_m}_{r_m}
$$
where the integers $m$, $s_0, s_1,r_1,\dots,s_m, r_m$ satisfy the following conditions:

\smallskip

{\rm (i)} $0 \leq 2m \leq n$; $1 \leq r_i \leq s_i$  and $(r_i,s_i)=1$ for all $i=1,...,m$;

\smallskip

{\rm (ii)} $s_0 \geq 1$ and $s_0 s_1 \dots  s_m=d$.

\smallskip

\noindent 
{\rm (2)} $\calA \otimes_{R_n} F_n$ is division if and only if $s_0=1$.

\smallskip

\noindent 
{\rm (3)} Let $\calA$ be an Azumaya algebra as in (1).
If $s_0\geq 2$ we have $$H^1_{Zar} (R_n, \GL_1(\calA))=1.$$

\end{theorem}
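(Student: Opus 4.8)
The plan is to reduce, via Morita equivalence, to a freeness statement for finitely generated projective modules over a quantum torus, and then to invoke Artamonov's theorem \cite{A}. First I would perform the Morita reduction. Write $\calA = M_{s_0}(R_n)\otimes_{R_n}\calB$ with $\calB = A(t_1,t_2)^{s_1}_{r_1}\otimes\cdots\otimes A(t_{2m-1},t_{2m})^{s_m}_{r_m}$ (base changed to $R_n$), so that $\calA \cong M_{s_0}(\calB)$ and hence $\GL_1(\calA)\cong\GL_1(M_{s_0}(\calB))$. The matrix Morita equivalence between $\calB$ and $M_{s_0}(\calB)$ carries the progenerator $\calA$ to the free module $\calB^{s_0}$, and therefore identifies the pointed set $H^1_{Zar}(R_n,\GL_1(\calA))$ with the set of isomorphism classes of finitely generated projective right $\calB$-modules that are Zariski-locally isomorphic to $\calB^{s_0}$, the distinguished point being $\calB^{s_0}$ itself. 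Thus it suffices to show that every such $\calB$-module is free.

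Next I would identify $\calB$ as a quantum torus over $k$ and pin down the relevant notion of rank. In each factor $A(t_{2i-1},t_{2i})^{s_i}_{r_i}$ the generators $X_i,Y_i$ are invertible, since $X_i^{s_i}=t_{2i-1}$ and $Y_i^{s_i}=t_{2i}$ are units of $R_n$, and they satisfy only $Y_iX_i=\zeta_{s_i}^{r_i}X_iY_i$; adjoining the central units $t_{2m+1},\dots,t_n$ exhibits $\calB$ as the quantum torus $k_Q[X_1^{\pm1},Y_1^{\pm1},\dots,X_m^{\pm1},Y_m^{\pm1},t_{2m+1}^{\pm1},\dots,t_n^{\pm1}]$ over $k$ in $n$ variables, where $Q$ encodes the relations $Y_iX_i=\zeta_{s_i}^{r_i}X_iY_i$ and makes all remaining pairs of generators commute. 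As in Lemma~\ref{division}, $\calB\otimes_{R_n}F_n$ is a division algebra, so the generic rank of a finitely generated projective right $\calB$-module is well defined; in particular a module Zariski-locally isomorphic to $\calB^{s_0}$ has rank $s_0$.

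Finally I would invoke Artamonov's freeness theorem \cite{A}: over a quantum torus, every finitely generated projective module of rank at least $2$ is free. Since $s_0\geq2$, a right $\calB$-module Zariski-locally isomorphic to $\calB^{s_0}$ is finitely generated projective of rank $s_0\geq2$, hence free, i.e. isomorphic to $\calB^{s_0}$. Consequently every Zariski-locally trivial $\GL_1(\calA)$-torsor is trivial, which gives $H^1_{Zar}(R_n,\GL_1(\calA))=1$. I expect the main obstacle to lie in matching Artamonov's hypotheses precisely: one must verify that his freeness result applies to quantum tori with root-of-unity parameters $\zeta_{s_i}^{r_i}$ (and not only to generic parameters), that the extra commuting variables $t_{2m+1},\dots,t_n$ cause no difficulty, and that the notion of rank used in his theorem agrees with the generic rank supplied by the Morita correspondence.
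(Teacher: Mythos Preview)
Your proposal addresses only part~(3). For that part it is correct and follows essentially the same route as the paper: write $\calA \cong M_{s_0}(\calB)$ with $\calB$ the underlying division Azumaya algebra, use Morita equivalence to translate invertible $\calA$-modules into finitely generated projective $\calB$-modules of rank $s_0$, and then apply Artamonov's freeness theorem \cite[cor.~3]{A} for rank $\geq 2$. Your explicit identification of $\calB$ as a quantum torus in $n$ variables is a detail the paper leaves implicit (it simply cites Artamonov for crossed products), and your caution about matching his hypotheses precisely is well placed but not an obstacle: his result covers arbitrary parameters, including roots of unity, and the extra central variables $t_{2m+1},\dots,t_n$ are absorbed as commuting generators of the quantum torus.

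For parts~(1) and~(2), which you do not treat: part~(2) is exactly Lemma~\ref{division}. For part~(1), the paper first handles the case where $\calA\otimes_{R_n}F_n$ is division by citing \cite[th.~4.7]{GP3}; in the non-division case, Wedderburn over $F_n$ gives $\calA\otimes_{R_n}F_n\cong M_s(A')$, the acyclicity theorem lifts $A'$ to a toral Azumaya $R_n$-algebra $\calA'$ and then shows $\calA\cong M_s(\calA')$, reducing to the division case.
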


\begin{proof}(1) If $\calA \otimes_{R_n} F_n$ is division, this is \cite[th. 4.7]{GP3}.  
   Assume now that  $\calA \otimes_{R_n} F_n$ is not division.
 By Wedderburn's theorem there exists an integer $s \geq 1$ and 
 a central division $F_n$--algebra $A'$ such that 
 $\calA \otimes_{R_n} F_n\cong M_s(A')$.
 The acyclicity theorem provides a toral $R_n$--Azumaya algebra $\calA'$ such that 
 $\calA' \otimes_{R_n} F_n\cong A'$. Since $M_s(\calA')$ and $\calA$ are isomorphic
 over $F_n$, the acyclicity theorem again shows that $\calA \cong M_s(\calA')$.
 It remains to note that $\calA'$ being a division algebra over $R_n$ is of the required form
 by the first case.

\smallskip

\noindent
(2) The assertion follows from Lemma~\ref{division}.

\smallskip

\noindent (3) Write  $\calA= M_s( \calA')$ with $\calA'$ is division and  $s \geq 2$.
Morita equivalence provides a one-to-one correspondence between
invertible $\calA$--modules and finitely generated projective $\calA'$--modules
of relative rank $s$. Artamonov's result~\cite{A} states  that those $\calA'$--modules are free \cite[cor. 3]{A}
since $s\geq 2$, so that invertible $\calA$--modules  are free as well. This implies  $H^1_{Zar} (R_n, \GL_1(\calA))=1$.
\end{proof}

\begin{remarks}{\rm

\noindent (a) The third statement shows that for each invertible $\calA$--module $P$ 
the module $P \oplus \calA$ is free.

\smallskip

\noindent (b) The third statement refines Steinmetz's results in the $2$-dimension case ($n=2$) 
\cite[th. 4.8]{St} where  the case  $s\geq 3$ was considered only. Note that 
\cite{St}  provides some other cases for classical groups when 
all Zariski locally trivial torsors are trivial.
}
\end{remarks}

\subsection{Applications to $R_n$--Lie algebras}

We next consider  the special case  $\bG=\Aut(\gg)$
where $\gg$ is a split simple Lie algebra over $k$ of finite dimension.
For such group the set  $H^1_{\et}(R_n, \bG)$ classifies  $R_n$-forms of the Lie algebra $\gg \otimes_k R_n$ and
$H^1_{ toral }(R_n, \bG)$ classifies loop objects, i.e. those which arise from
loop cocycles. More precisely, by \cite[\S 6]{GP3}
we have   
$$
\mathrm{Im}\Bigl( H^1\bigl( \pi_1(R_n,1), \bG(k_s)) \to H^1(R_n, \bG) \Bigr)
= H^1_{ toral }(R_n, \bG).$$   Theorems~\ref{main} and  \ref{cor_main}
have the following consequences.

\begin{corollary} {\rm (1)} Let $\widetilde{\cL}$ be a $K_n$--form of the Lie algebra $\gg \otimes_k K_n$.
If $\widetilde{\cL}$ is unramified, i.e. extends everywhere in codimension one, 
then $\widetilde{\cL}$ is isomorphic to the generic fiber of a unique multiloop Lie algebra $\cL.$ (Of course $\cL$, being a multiloop algebra, is a twisted from of the $R_n$--Lie algebra  
$\gg  \otimes_k R_n$).

\smallskip

\noindent 
{\rm (2)} Let $\calL$ be any $R_n$--form of $\gg \otimes_R R_n$. Then there exists a (unique  up to $R_n$--isomorphism) 
multiloop Lie algebra
$\calL_{loop}$ over $R_n$ such that $\calL$ is a Zariski  $R_n$--form of $\calL_{loop}$.

\end{corollary}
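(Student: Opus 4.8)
The plan is to deduce this Corollary directly from Theorems~\ref{main} and \ref{cor_main} applied to the linear algebraic $k$--group $\bG = \Aut(\gg)$, using the dictionary between $H^1_{\et}(R_n,\bG)$ and $R_n$--forms of $\gg\otimes_k R_n$ recalled just above, together with the identification of $H^1_{toral}(R_n,\bG)$ with the classes of multiloop Lie algebras. Since $\gg$ is split simple, $\bG=\Aut(\gg)$ is a linear algebraic $k$--group which is smooth but in general \emph{not connected} (its component group is the group of diagram automorphisms), so the base-changed group $\bG\times_k R_n$ (and any of its loop twists) falls precisely under hypothesis (ii) of Theorem~\ref{main}; this is exactly why the non-connected case was included. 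I would open by noting that forms of $\gg\otimes_k R_n$ correspond to classes in $H^1(R_n,\bG)$, forms of $\gg\otimes_k K_n$ to classes in $H^1(K_n,\bG)$, and multiloop algebras to the image of $H^1(\pi_1(R_n,1),\bG(k_s))$, which by \cite[\S 6]{GP3} equals $H^1_{toral}(R_n,\bG)$.

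For part (1), first I would translate the hypothesis that $\widetilde{\cL}$ extends everywhere in codimension one into the cohomological statement that its class $\gamma\in H^1(K_n,\bG)$ lies in the subset $H^1(K_n,\bG)_{R_n\text{-}unr}$; this is precisely the definition in \S\ref{sect_def_unr}. Then Theorem~\ref{main} supplies a bijection $H^1_{toral}(R_n,\bG)\simlgr H^1(K_n,\bG)_{R_n\text{-}unr}$, so there is a \emph{unique} toral class $[E]\in H^1_{toral}(R_n,\bG)$ whose generic fiber is $\gamma$. Unwinding the dictionary, $[E]$ corresponds to a multiloop Lie algebra $\cL$ whose generic fiber is isomorphic to $\widetilde{\cL}$, and uniqueness of $\cL$ up to $R_n$--isomorphism is exactly the injectivity half of the bijection.

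For part (2), I would invoke the disjoint union decomposition of Theorem~\ref{cor_main}: every class $[\calL]\in H^1(R_n,\bG)$ lies in the image of a \emph{unique} summand indexed by some toral class $[E]\in H^1_{toral}(R_n,\bG)$, via the torsion bijection applied to a Zariski-locally-trivial class in $H^1_{Zar}(R_n,{^E\bG})$. The toral class $[E]$ translates into a multiloop Lie algebra $\calL_{loop}$, and the statement that $[\calL]$ comes from $H^1_{Zar}(R_n,{^E\bG})$ says precisely that $\calL$ and $\calL_{loop}$ become isomorphic Zariski-locally on $\Spec(R_n)$, i.e.\ $\calL$ is a Zariski $R_n$--form of $\calL_{loop}$; uniqueness of $\calL_{loop}$ up to $R_n$--isomorphism is the uniqueness of the indexing summand in the decomposition.

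The only genuine point requiring care—and the step I would flag as the main obstacle—is verifying that $\bG=\Aut(\gg)$, though possibly disconnected, legitimately satisfies hypothesis (ii) of Theorems~\ref{main} and \ref{cor_main}, and that the twisted groups ${^E\bG}$ appearing in the decomposition remain of the same type so that Corollary~\ref{zar} and the torsion machinery apply uniformly; once this structural fact is in place (it follows since $\Aut(\gg)$ is an extension of the finite constant diagram-automorphism group by the adjoint connected group, hence base-changes and loop-twists stay extensions of finite twisted constant groups by reductive groups), everything else is a formal transcription of the two main theorems through the established correspondence between torsors and Lie-algebra forms, with no further computation needed.
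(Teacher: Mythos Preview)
Your proposal is correct and matches the paper's approach exactly: the paper states the Corollary as an immediate consequence of Theorems~\ref{main} and~\ref{cor_main} together with the dictionary (recalled just before the statement) identifying $H^1(R_n,\bG)$ with $R_n$--forms of $\gg\otimes_k R_n$ and $H^1_{toral}(R_n,\bG)$ with multiloop algebras, and offers no further argument. Your write-up simply spells out the translation step by step, including the observation that $\bG=\Aut(\gg)$ falls under hypothesis~(ii); this is precisely what the paper intends.
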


\bigskip

\medskip

\end{document}